\documentclass[leqno,11pt]{amsart}

\usepackage{amsmath,amstext,amssymb,amsopn,amsthm,mathrsfs}

\title[Sobolev spaces for Jacobi expansions]
{Sobolev spaces \\ associated with Jacobi expansions}

\author[B{.} Langowski]{Bartosz Langowski}

\address{Bartosz Langowski \newline
			Institute of Mathematics and Computer Science \newline
      Wroc\l{}aw University of Technology       \newline
      Wyb{.} Wyspia\'nskiego 27,
      50--370 Wroc\l{}aw, Poland      
      }
\email{bartosz.langowski@pwr.wroc.pl}

\allowdisplaybreaks

\pagestyle{headings}

\theoremstyle{plain}
\newtheorem{thm}{Theorem}[section]
\newtheorem{bigthm}{Theorem}
\newtheorem{lem}[thm]{Lemma}
\newtheorem{prop}[thm]{Proposition}
\newtheorem{cor}[thm]{Corollary}

\theoremstyle{definition}

\theoremstyle{remark}
\newtheorem*{rem*}{Remark}

\setlength{\textheight}{23cm}
\setlength{\textwidth}{16cm}
\setlength{\oddsidemargin}{0cm}
\setlength{\evensidemargin}{0cm}
\setlength{\topmargin}{0cm}

\theoremstyle{plain}

\DeclareMathOperator{\spann}{span}

\DeclareMathOperator{\support}{supp}

\DeclareMathOperator{\id}{Id}

\def\Z{\mathbb Z}

\def\Lp{\mathcal L_{\ab}^{p,m}}

\def\P{\mathcal P}

\def\ab{\alpha,\beta}

\def\ph{\phi_n^{\ab}}

\def\D{\mathcal D}

\def\gr{\lim_{l\rightarrow\infty}}

\begin{document}

\begin{abstract}
We define and study Sobolev spaces associated with Jacobi expansions. We prove that
these Sobolev spaces are isomorphic to Jacobi potential spaces. As a technical tool, we also show some
approximation properties of Poisson-Jacobi integrals.
\end{abstract}

\maketitle

\footnotetext{
\emph{\noindent Mathematics Subject Classification:} primary 42C10; secondary 42C05, 42C20.\\
\emph{Key words and phrases:} Jacobi expansion, Sobolev space, potential space,
	Poisson-Jacobi integral, maximal operator.
} 

\section{Introduction} \label{sec:intro}

Sobolev spaces associated to Hermite and Laguerre expansions were investigated not long ago in
\cite{BT1,BT2,Graczyk,RT}. Recently Betancor et al.\ \cite{betancor} studied Sobolev spaces in the
context of ultraspherical expansions. Inspired by \cite{betancor}, in this paper we define and study
Sobolev spaces in a more general situation of Jacobi expansions. Noteworthy, analysis related to Jacobi
expansions received a considerable attention over the last fifty years. For the corresponding developments
in the recent years, see for instance
\cite{BaUr1,BaUr2,CU,CNS,L,LZ,LSL,MT,Nowak&Roncal,NoSj,NS1,NS2,parameters,Stempak}.

Our motivation is, first of all, to extend definitions and results from \cite{betancor} to the framework
of Jacobi expansions. Another motivation comes from a question of removing the restriction on the 
ultraspherical parameter of type $\lambda$ imposed throughout \cite{betancor}. In this paper we admit
all possible Jacobi parameters of type $\alpha,\beta$, thus also all possible $\lambda$.
Finally, still another motivation originates in the very definition of the ultraspherical Sobolev spaces
proposed in \cite{betancor}. It is based on higher order `derivatives' involving first order differential
operators related to various parameters of type $\lambda$. Here we consider another, seemingly more natural,
definition of Jacobi Sobolev spaces by means of higher order `derivatives' linked to one fixed pair of
the type parameters $\alpha,\beta$. The concept of higher order `derivative' we employ was postulated
recently by Nowak and Stempak \cite{symmetrized}, 
and the question of its relevance to the theory of Sobolev spaces was posed there. 
Perhaps a bit unexpectedly, we show that the associated Sobolev spaces are not quite appropriate.

Given parameters $\alpha, \beta>-1$, consider the Jacobi differential operator
\begin{equation*} 
L_{\ab}=-\frac{d^2}{d\theta^2} -\frac{1-4\alpha^2}{16\sin^2\frac{\theta}{2}}-\frac{1-4\beta^2}{16\cos^2\frac{\theta}{2}}
 = D_{\ab}^* D_{\ab} + A^2_{\ab}; 
\end{equation*}
here $A_{\ab}=(\alpha+\beta+1)/2$,
$D_{\ab}=\frac{d}{d\theta}-\frac{2\alpha+1}{4}\cot\frac{\theta}{2}+\frac{2\beta+1}{4}\tan\frac{\theta}{2}$
is the first order `derivative' naturally associated to $L_{\ab}$, and 
$D_{\ab}^* = D_{\ab}-2\frac{d}{d\theta}$ is its formal adjoint in $L^2(0,\pi)$.
It is well known that $L_{\ab}$, defined initially on $C_c^2(0,\pi)$, has a non-negative self-adjoint
extension to $L^2(0,\pi)$ whose spectral decomposition is discrete and given by the Jacobi functions
$\phi_n^{\ab}$, $n \ge 0$. The corresponding eigenvalues are $\lambda_n^{\ab} = (n+A_{\ab})^2$,
and the system $\{\phi_n^{\ab}:n\ge 0\}$ constitutes an orthonormal basis in $L^2(0,\pi)$; 
see Section~\ref{sec:tech} for more details. If $\alpha+1/2=\beta+1/2 =: \lambda$, then
the Jacobi context reduces to the ultraspherical situation considered in \cite{betancor}.

When $\ab \ge -1/2$, the functions $\phi_n^{\ab}$ belong to all $L^p(0,\pi)$, $1\le p \le \infty$.
However, if $\alpha< -1/2$ or $\beta < -1/2$, then $\phi_n^{\ab}$ are in $L^p(0,\pi)$ if and only if
$p < p(\ab):=-1/\min(\alpha+1/2,\beta+1/2)$. This leads to the so-called pencil phenomenon 
(cf.\ \cite{NoSj3}) manifesting in the restriction $p'(\ab) < p < p(\ab)$ for $L^p$ mapping
properties of various operators associated with $L_{\ab}$ (here $p'$ denotes the conjugate exponent of $p$,
$1/p+1/p' =1$). Consequently, our main results are restricted to $p \in E(\ab)$, where
$$
E(\ab) := \begin{cases}
						(1,\infty), &  \ab \ge -1/2, \\
						\big(p'(\ab),p(\ab)\big), & \textrm{otherwise}.
					\end{cases}
$$

Let $\sigma>0$. For $\alpha+\beta \neq -1$, consider the potential operator $L_{\ab}^{-\sigma}$.
When $\alpha+\beta = -1$, zero is the eigenvalue of $L_{\ab}$ and hence we consider instead the Bessel
type potential operator $(\id+L_{\ab})^{-\sigma}$. In both cases the potentials are well defined spectrally
and are
bounded on $L^2(0,\pi)$ and possess integral representations valid not only in $L^2(0,\pi)$, 
but also far beyond that space. We will show that $L_{\ab}^{-\sigma}$ and $(\id+L_{\ab})^{-\sigma}$
are one to one and bounded on $L^p(0,\pi)$, $p \in E(\ab)$. Thus, given $s>0$ and $p \in E(\ab)$, 
it makes sense to define the Jacobi potential spaces as the ranges of the potential operators on $L^p(0,\pi)$,
$$
\mathcal{L}_{\ab}^{p,s} := \begin{cases}
															L_{\ab}^{-s/2}\big( L^p(0,\pi)\big), & \alpha+\beta \neq -1,\\
															(\id+L_{\ab})^{-s/2}\big( L^p(0,\pi)\big), & \alpha+\beta = -1.
														\end{cases}
$$
Then the formula
$$
\|f\|_{\mathcal{L}_{\ab}^{p,s}} := \|g \|_{L^p(0,\pi)}, \qquad
																		\begin{cases} 
																			 f=L^{-s/2}_{\ab}g, \quad g \in L^p(0,\pi),
																			 		& \alpha+\beta \neq -1,\\														
																			 f = (\id+L_{\ab})^{-s/2}g, \quad g \in L^p(0,\pi),
																			 		& \alpha+\beta = -1,
																		\end{cases}
$$
defines a norm on $\mathcal{L}_{\ab}^{p,s}$ and it is straightforward to check that $\mathcal{L}_{\ab}^{p,s}$
equipped with this norm is a Banach space.

According to a general concept, Sobolev spaces $\mathbb{W}_{\ab}^{p,m}$, $m \ge 1$, associated to $L_{\ab}$
should be defined by
$$
\mathbb{W}_{\ab}^{p,m} := \big\{ f \in L^p(0,\pi) : \mathbb{D}^{(k)}f \in L^p(0,\pi), k=1,\ldots,m \big\}
$$
and equipped with the norms
$$
\|f\|_{\mathbb{W}_{\ab}^{p,m}} := \sum_{k=0}^{m} \|\mathbb{D}^{(k)}f\|_{L^p(0,\pi)}.
$$
Here $\mathbb{D}^{(k)}$ are suitably defined differential operators of orders $k$ playing the role of
higher order derivatives, and the differentiation is understood in a weak sense. Thus $\mathbb{W}_{\ab}^{p,m}$
depends on a proper choice of $\mathbb{D}^{(k)}$, which is actually the heart of the matter.
It was shown in \cite{betancor} that even in the ultraspherical case seemingly the most natural choice
$\mathbb{D}^{(k)} = D_{\ab}^k$ is not appropriate since then the spaces $\mathbb{W}_{\ab}^{p,m}$ and
$\mathcal{L}_{\ab}^{p,m}$ are not isomorphic in general. 

On the other hand, the isomorphism between Sobolev and potential spaces is a crucial aspect of the
classical theory that should be preserved in the present setting. With this motivation, inspired by
\cite{betancor}, we introduce the higher order `derivative'
$$
D^{(k)} := D_{\alpha+k-1,\beta+k-1} \circ \ldots \circ D_{\alpha+1,\beta+1} \circ D_{\ab}.
$$
Then taking $\mathbb{D}^{(k)}=D^{(k)}$ we get Sobolev spaces satisfying the desired property.
Denote
$$
W_{\ab}^{p,m} := \big\{ f \in L^p(0,\pi) : D^{(k)}f \in L^p(0,\pi), k=1,\ldots,m \big\}.
$$
We will prove the following.
\begin{bigthm} \label{thm:sob1}
Let $\ab>-1$, $p \in E(\ab)$ and $m \ge 1$. Then
$$
W_{\ab}^{p,m} = \mathcal{L}_{\ab}^{p,m}
$$
in the sense of isomorphism of Banach spaces.
\end{bigthm}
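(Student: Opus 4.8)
The plan is to prove the set equality together with the two-sided norm estimate $\|f\|_{W_{\ab}^{p,m}}\simeq\|f\|_{\mathcal L_{\ab}^{p,m}}$, which is what \emph{isomorphism of Banach spaces} amounts to. The backbone is the ladder structure of $D_{\ab}$. From the factorization $L_{\ab}=D_{\ab}^*D_{\ab}+A_{\ab}^2$ together with the partner identity $D_{\ab}D_{\ab}^*+A_{\ab}^2=L_{\alpha+1,\beta+1}$ one reads off that $D_{\ab}$ intertwines $L_{\ab}$ and $L_{\alpha+1,\beta+1}$; since $\lambda_{n-1}^{\alpha+1,\beta+1}=\lambda_n^{\ab}$, this forces $D_{\ab}\phi_n^{\ab}=(\lambda_n^{\ab}-\lambda_0^{\ab})^{1/2}\phi_{n-1}^{\alpha+1,\beta+1}$ and hence, iterating,
$$
D^{(k)}\phi_n^{\ab}=\Big(\prod_{j=0}^{k-1}\big(\lambda_n^{\ab}-\lambda_j^{\ab}\big)\Big)^{1/2}\phi_{n-k}^{\alpha+k,\beta+k}\quad(n\ge k),
$$
with $D^{(k)}\phi_n^{\ab}=0$ for $n<k$. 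Two consequences drive everything: first, $(D^{(m)})^*D^{(m)}=\prod_{j=0}^{m-1}(L_{\ab}-\lambda_j^{\ab})$ exactly; second, the order-$k$ \emph{Riesz transform} $R^{(k)}:=D^{(k)}L_{\ab}^{-k/2}$ acts by the symbol $\prod_{j=0}^{k-1}(1-\lambda_j^{\ab}/\lambda_n^{\ab})^{1/2}$, which is bounded by $1$ and tends to $1$. Establishing the $L^p$-boundedness of $R^{(k)}$ (as an operator on $L^p(0,\pi)$, mapping the $\phi^{\ab}$-expansion into the $\phi^{\alpha+k,\beta+k}$-expansion, for $p\in E(\ab)\subseteq E(\alpha+k,\beta+k)$) via Calderón--Zygmund estimates for its kernel is the central analytic input, and is where the kernel bounds coming from the Poisson--Jacobi integral enter.

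Granting this, the inclusion $\mathcal L_{\ab}^{p,m}\subseteq W_{\ab}^{p,m}$ with $\|f\|_{W}\lesssim\|f\|_{\mathcal L}$ is the easier half. For $f=L_{\ab}^{-m/2}g$ and $0\le k\le m$ one has, on the spectral side, $D^{(k)}f=R^{(k)}\big(L_{\ab}^{-(m-k)/2}g\big)$; the factor $L_{\ab}^{-(m-k)/2}$ is a bounded potential operator and $R^{(k)}$ is bounded, so $\|D^{(k)}f\|_{L^p}\lesssim\|g\|_{L^p}=\|f\|_{\mathcal L}$, and summation over $k$ gives the claim. The subtlety here is not the estimate but the verification that the spectrally defined $R^{(k)}L_{\ab}^{-(m-k)/2}g$ really is the \emph{weak} derivative $D^{(k)}f$ demanded by the definition of $W_{\ab}^{p,m}$. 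I would settle this with the approximation properties of the Poisson--Jacobi integrals $P_t$: being smooth for $t>0$, commuting with the spectral operators, and satisfying $P_tf\to f$ in $L^p$, they reduce the weak identity (tested against $\phi_{n-k}^{\alpha+k,\beta+k}$, which is admissible since $\phi_n^{\ab}\in L^{p'}$ for $p\in E(\ab)$) to its evident counterpart for the smooth functions $P_tf$, after which one passes to the limit.

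The reverse inclusion $W_{\ab}^{p,m}\subseteq\mathcal L_{\ab}^{p,m}$, with $\|f\|_{\mathcal L}\lesssim\|f\|_{W}$, is the core of the argument. Given $f\in W_{\ab}^{p,m}$ put $g:=L_{\ab}^{m/2}f$, so that $\|f\|_{\mathcal L}=\|g\|_{L^p}$ and $\widehat g_n=(\lambda_n^{\ab})^{m/2}a_n$ with $a_n:=\langle f,\phi_n^{\ab}\rangle$. Testing the weak derivative against $\phi_{n-m}^{\alpha+m,\beta+m}$ and using $(D^{(m)})^*\phi_{n-m}^{\alpha+m,\beta+m}=(\prod_{j}(\lambda_n^{\ab}-\lambda_j^{\ab}))^{1/2}\phi_n^{\ab}$ identifies the Fourier--Jacobi coefficients of $h:=D^{(m)}f\in L^p$ as $(\prod_{j=0}^{m-1}(\lambda_n^{\ab}-\lambda_j^{\ab}))^{1/2}a_n$. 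Thus $g$ is recovered from $h$ by the symbol $\prod_{j=0}^{m-1}(1-\lambda_j^{\ab}/\lambda_n^{\ab})^{-1/2}$, the formal inverse of the Riesz symbol. I would realize this inverse boundedly as follows. Apply first the adjoint $R^{(m)*}=L_{\ab}^{-m/2}(D^{(m)})^*$, bounded on $L^p$ by duality (as $p\in E(\ab)\Leftrightarrow p'\in E(\ab)$), which carries $h$ to $\sum_n(\lambda_n^{\ab})^{m/2}\prod_j(1-\lambda_j^{\ab}/\lambda_n^{\ab})\,a_n\phi_n^{\ab}$; it then remains to remove the smooth factor $\prod_j(1-\lambda_j^{\ab}/\lambda_n^{\ab})^{-1}$. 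Splitting off the finitely many frequencies $n<N$ (a finite-rank piece controlled by $\|f\|_{L^p}$, since each $\phi_n^{\ab}\in L^p$ and $|a_n|\lesssim\|f\|_{L^p}$), on the range $n\ge N$ I expand this factor as a Neumann series $\id+\sum_{i\ge1}p_iL_{\ab}^{-i}$ in the bounded potential operators $L_{\ab}^{-i}$; choosing $N$ so large that $\lambda_N$ exceeds the product of $\lambda_{m-1}^{\ab}$ and $\|L_{\ab}^{-1}\|_{L^p\to L^p}$ makes the series converge in operator norm. This yields $\|g\|_{L^p}\lesssim\|f\|_{L^p}+\|D^{(m)}f\|_{L^p}\le\|f\|_{W}$.

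The step I expect to be the main obstacle is the $L^p$-boundedness of the Riesz transforms $R^{(k)}$ across the full admissible range $p\in E(\ab)$ and for all $\alpha,\beta>-1$: the pencil phenomenon and the singular coefficients $\cot\frac{\theta}{2}$, $\tan\frac{\theta}{2}$ in $D_{\ab}$ make the relevant kernels genuinely singular, and it is precisely the Poisson--Jacobi integral, and the approximation estimates proved for it, that furnish both the kernel bounds and the weak-to-spectral identifications used in the two inclusions. Finally, the case $\alpha+\beta=-1$ (where $\lambda_0^{\ab}=0$ is an eigenvalue) is absorbed by replacing $L_{\ab}$ with $\id+L_{\ab}$ throughout: the ladder relation is unaffected, the symbol $\prod_j(1-\lambda_j^{\ab}/\lambda_n^{\ab})^{-1/2}$ becomes regular also at $n=0$, and the Neumann-series argument goes through verbatim.
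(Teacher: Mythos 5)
Your skeleton --- the ladder identities for $D^{(k)}$, $L^p$-boundedness of the Riesz-type operators $D^{(k)}L_{\ab}^{-k/2}$ and their adjoints, identification of the Fourier--Jacobi coefficients of the weak derivative $D^{(m)}f$, splitting off the finitely many lowest frequencies, and inverting the remaining multiplier --- is exactly the architecture of the paper's proof (Lemma \ref{12'}, Propositions \ref{4'}, \ref{5'}, \ref{6'}, Lemma \ref{lem:32}). But the two steps you lean on hardest are, respectively, undeveloped and incorrect as stated, and both occur precisely where the paper invokes its key tool, Muckenhoupt's multiplier-transplantation theorem (Lemma \ref{1'}). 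For the boundedness of $D^{(k)}L_{\ab}^{-k/2}$ on $L^p$, $p\in E(\ab)$, you propose Calder\'on--Zygmund estimates for the kernel. Unweighted CZ theory, were it applicable, would produce boundedness for all $1<p<\infty$; that conclusion is false when $\alpha<-1/2$ or $\beta<-1/2$ (the pencil phenomenon), so any kernel-based proof must run through weighted analysis in the Jacobi-polynomial setting and a transference step --- essentially redoing the cited Nowak--Sj\"ogren work --- none of which you carry out. The paper's route is short: by Lemma \ref{12'} the operator is the multiplier-transplantation operator $f\mapsto\sum_n g(n)\,a_n^{\ab}(f)\,\phi_{n-k}^{\alpha+k,\beta+k}$ with symbol $g(n)=\prod_{j=0}^{k-1}\bigl(1-\lambda_j^{\ab}/\lambda_n^{\ab}\bigr)^{1/2}$, an analytic function of $1/n$ near $n=\infty$, so Lemma \ref{1'} applies verbatim and yields exactly the admissible range.

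The clear-cut gap is the Neumann-series inversion of the symbol $\prod_{j=0}^{m-1}\bigl(1-\lambda_j^{\ab}/\lambda_n^{\ab}\bigr)$. The Taylor coefficients $p_i$ of $\prod_{j}(1-\lambda_j^{\ab}x)^{-1}$ grow essentially like $(\lambda_{m-1}^{\ab})^i$, since the nearest pole is at $x=1/\lambda_{m-1}^{\ab}$. Hence operator-norm convergence of $\id+\sum_{i\ge1}p_i\,\bigl(L_{\ab}^{-1}P_{\ge N}\bigr)^i$, where $P_{\ge N}$ is the spectral projection onto frequencies $n\ge N$, requires $\bigl\|L_{\ab}^{-1}P_{\ge N}\bigr\|_{L^p\to L^p}<1/\lambda_{m-1}^{\ab}$. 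Your criterion involves $\|L_{\ab}^{-1}\|_{L^p\to L^p}$, a quantity that does not shrink as $N$ grows, so choosing $N$ large buys nothing; and the quantity that actually matters, $\|L_{\ab}^{-1}P_{\ge N}\|_{L^p\to L^p}$, equals $\lambda_N^{-1}$ only on $L^2$. For $p\neq2$ nothing forces it to become small: the projections $P_{\ge N}$ are not even known to be uniformly $L^p$-bounded throughout $E(\ab)$ (uniform partial-sum bounds for Jacobi expansions are delicate; the paper invokes Gilbert's theorem only for $\ab\ge-1/2$), and an interpolation rescue against the $L^2$ bound would need exactly such uniform bounds. The paper avoids the issue altogether: the inverse symbol $(n+A_{\ab})^{2m}/\bigl[(n-m+1)_m(n+\alpha+\beta+1)_m\bigr]$, $n\ge m$, is again analytic in $1/n$ at infinity, so Lemma \ref{1'} applies to it directly and gives the bounded operator $T_{\ab}^{m}$ of Proposition \ref{6'}; the identity $g=g_1+T_{\ab}^{m}R_{\ab}^{m,2}R_{\ab}^{m,1}g$ on $S_{\ab}$ then closes the estimate. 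If you replace your series by this single application of Muckenhoupt's theorem, and justify the coefficient identification for weak derivatives by the cutoff argument of Lemma \ref{lem:32} (you assert it, but testing against the $\phi$'s, which are not compactly supported, genuinely requires that argument), your proof becomes essentially the paper's.
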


Notice that the higher order `derivative' $D^{(k)}$ has a philosophical disadvantage that is the dependence
on the first order `derivatives' related to variable parameters of type. Thus we ask if it is possible to
overcome this inconvenience by introducing still another notion of higher order `derivative'
$$
\mathcal{D}^{(k)} := 
\underbrace{\ldots D_{\ab} D_{\ab}^* D_{\ab} D_{\ab}^* D_{\ab}}_{k\; \textrm{components}}.
$$
This choice resulting from interlacing $D_{\ab}$ with $D_{\ab}^*$ is postulated and supported in 
\cite{symmetrized}, and the question of its utility in the theory of Sobolev spaces was posed in 
\cite[p.\,441]{symmetrized}.
Clearly, $\mathcal{D}^{(k)}$ depends only on $D_{\ab}$ (one pair of type parameters
involved) and has a simpler structure than $D^{(k)}$ because $D_{\ab}^* D_{\ab} = L_{\ab}-A_{\ab}^2$.
Unfortunately, $\mathcal{D}^{(k)}$ turns out to be unsuitable for defining the Sobolev spaces, leading 
in fact to essentially larger Sobolev spaces than $D^{(k)}$. Let
$$
\mathcal{W}_{\ab}^{p,m} := \big\{ f \in L^p(0,\pi) : \mathcal{D}^{(k)}f \in L^p(0,\pi), k=1,\ldots,m \big\}.
$$
\begin{bigthm} \label{thm:sob2}
Let $\ab>-1$, $p \in E(\ab)$ and $m \ge 1$. Then
$$
\mathcal{L}_{\ab}^{p,m} \subset \mathcal{W}_{\ab}^{p,m}
$$
in the sense of embedding of Banach spaces.
However, the reverse inclusion does not hold for all parameters values. In particular,
for each $\alpha,\beta$ satisfying $0 \neq \alpha, \beta < 1/p-1/2$ there is 
$f \in \mathcal{W}_{\ab}^{p,2}$ such that $f \notin \mathcal{L}_{\ab}^{p,2}$.
\end{bigthm}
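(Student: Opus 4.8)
\emph{The embedding $\Lp\subset\Wpp$.} The plan is to reduce $\Dk$ to a function of $L_{\ab}$ carrying at most one copy of $D_{\ab}$. Since $D_{\ab}^{*}D_{\ab}=L_{\ab}-A_{\ab}^{2}$, the interlacing defining $\Dk$ collapses to
\[
\D^{(2j)}=(L_{\ab}-A_{\ab}^{2})^{j},\qquad \D^{(2j+1)}=D_{\ab}(L_{\ab}-A_{\ab}^{2})^{j}.
\]
Let $f=\pot g\in\Lp$ with $g\in L^{p}(0,\pi)$ (the case $\alpha+\beta=-1$ is identical with $L_{\ab}$ replaced by $\id+L_{\ab}$ throughout). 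For even $k=2j\le m$ I would write $\D^{(2j)}f=\big[(L_{\ab}-A_{\ab}^{2})L_{\ab}^{-1}\big]^{j}L_{\ab}^{-(m-2j)/2}g$; each factor $(L_{\ab}-A_{\ab}^{2})L_{\ab}^{-1}=\id-A_{\ab}^{2}L_{\ab}^{-1}$ is bounded on $L^{p}(0,\pi)$ because $\pots$ is (take $\sigma=1$), and so is $L_{\ab}^{-(m-2j)/2}$ since $m-2j\ge0$. For odd $k=2j+1\le m$ I would set $h=L_{\ab}^{-(m-2j)/2}\tilde g$ with $\tilde g=\big[(L_{\ab}-A_{\ab}^{2})L_{\ab}^{-1}\big]^{j}g\in L^{p}(0,\pi)$, so that $h\in\mathcal L_{\ab}^{p,m-2j}\subset\mathcal L_{\ab}^{p,1}$ (as $m-2j\ge1$) and $\D^{(2j+1)}f=D_{\ab}h$; Theorem~\ref{thm:sob1} with $m=1$ then gives $D_{\ab}h=D^{(1)}h\in L^{p}(0,\pi)$. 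Tracking norms through each bounded operator yields the continuous embedding.

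\emph{Failure of the reverse inclusion.} By Theorem~\ref{thm:sob1}, $\mathcal L_{\ab}^{p,2}=W_{\ab}^{p,2}$, so it suffices to produce $f$ with $f,\,D_{\ab}f,\,(L_{\ab}-A_{\ab}^{2})f\in L^{p}(0,\pi)$ but $D^{(2)}f=D_{\alpha+1,\beta+1}D_{\ab}f\notin L^{p}(0,\pi)$. I would take
\[
f(\theta)=\theta^{\,1/2-\alpha}(\pi-\theta)^{\,1/2-\beta},
\]
that is, the solution following the \emph{singular} indicial exponent $1/2-\alpha$ at $\theta=0$ and $1/2-\beta$ at $\theta=\pi$; this $f$ is smooth on $(0,\pi)$ and lies in $L^{p}(0,\pi)$ because $\alpha,\beta<1/p-1/2<1/p+1/2$. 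Near $\theta=0$ a direct expansion (using $\cot\frac\theta2\sim2/\theta$) gives $D_{\ab}f\sim-2\alpha\,\theta^{-1/2-\alpha}$ up to a nonzero constant; the coefficient is nonzero precisely because $\alpha\neq0$, and this term belongs to $L^{p}$ near $0$ exactly when $\alpha<1/p-1/2$. The very choice of the singular exponent makes the leading $\theta^{-3/2-\alpha}$ contribution of $(L_{\ab}-A_{\ab}^{2})f$ cancel, leaving an $O(\theta^{1/2-\alpha})$ remainder that is in $L^{p}$; hence $f\in\mathcal W_{\ab}^{p,2}$. On the other hand $D^{(2)}f=D_{\alpha+1,\beta+1}(D_{\ab}f)\sim4\alpha(1+\alpha)\,\theta^{-3/2-\alpha}$, which fails to be in $L^{p}$ as soon as $\alpha>1/p-3/2$.

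The decisive observation is that the admissible range $p\in E(\ab)$ guarantees this last inequality. If $\alpha\ge-1/2$ it holds trivially since $1/p\le1$; if $\alpha<-1/2$, a short computation gives $p'(\ab)=1/\big(\min(\alpha,\beta)+3/2\big)$, whence $p>p'(\ab)$ is equivalent to $\min(\alpha,\beta)>1/p-3/2$ and in particular $\alpha>1/p-3/2$. The endpoint $\theta=\pi$ is treated identically with $(\alpha,\theta)$ replaced by $(\beta,\pi-\theta)$, so that the hypotheses $0\neq\alpha,\beta<1/p-1/2$ supply exactly the needed conditions at both ends. Consequently $D^{(2)}f\notin L^{p}(0,\pi)$, and therefore $f\in\mathcal W_{\ab}^{p,2}\setminus W_{\ab}^{p,2}=\mathcal W_{\ab}^{p,2}\setminus\mathcal L_{\ab}^{p,2}$.

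\emph{Main obstacle.} The embedding is essentially formal once the collapse of $\Dk$ and the $L^{p}$-boundedness of the potentials are in hand. The delicate part is the counterexample: carrying out the boundary asymptotics accurately enough to see simultaneously the cancellation that keeps $(L_{\ab}-A_{\ab}^{2})f$ in $L^{p}$ and the surviving singularity that ejects $D^{(2)}f$ from $L^{p}$, and---most subtly---recognizing that the condition $p\in E(\ab)$ is precisely what renders $\theta^{-3/2-\alpha}$ non-integrable. One must also confirm that these identities hold for the weakly defined operators; this is harmless here, since the asymptotics occur at the endpoints while the test functions are compactly supported in $(0,\pi)$, so no boundary terms arise and the weak derivatives coincide with the classical ones computed above.
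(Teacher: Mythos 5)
Your proposal is correct, but its two halves compare differently with the paper's proof. For the embedding $\mathcal{L}_{\ab}^{p,m}\subset\mathcal{W}_{\ab}^{p,m}$ you take a genuinely different route. The paper computes the action of $\mathcal{D}^{(k)}$ on the Jacobi functions (Lemma \ref{lem:inter}) and then proves $L^p$-boundedness of the Riesz-type operators $\mathcal{R}_{\ab}^{k}=\mathcal{D}^{(k)}L_{\ab}^{-k/2}$ by a fresh application of Muckenhoupt's multiplier-transplantation theorem (Proposition \ref{4''}); the embedding follows by writing $\mathcal{D}^{(k)}f=\mathcal{R}_{\ab}^{k}L_{\ab}^{-(m-k)/2}g$ and invoking Proposition \ref{prop:1'}. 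You instead collapse $\mathcal{D}^{(k)}$ through $D_{\ab}^*D_{\ab}=L_{\ab}-A_{\ab}^2$, which settles the even case using only boundedness of the potential operators and reduces the odd case to Theorem \ref{thm:sob1} with $m=1$. This buys economy — no new application of Lemma \ref{1'} is needed — at the price of making the embedding half of Theorem \ref{thm:sob2} depend on Theorem \ref{thm:sob1}, which the paper uses only in the counterexample half; since Theorem \ref{thm:sob1} is established beforehand, this is perfectly legitimate. What the paper's route buys in exchange is the $L^p$-boundedness of the interlacing Riesz transforms $\mathcal{R}_{\ab}^{k}$ as a statement of independent interest. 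Both arguments share the same standard, tacit identification of weak derivatives with the spectrally defined extensions (approximation from $S_{\ab}$ plus closedness of weak differentiation), which you at least flag.

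For the counterexample your argument is essentially the paper's: the paper takes $f=\Psi^{-\alpha,-\beta}(\theta)=(\sin\frac{\theta}{2})^{1/2-\alpha}(\cos\frac{\theta}{2})^{1/2-\beta}$, whose endpoint behaviour coincides with that of your $\theta^{1/2-\alpha}(\pi-\theta)^{1/2-\beta}$, and runs the same checks followed by the same appeal to Theorem \ref{thm:sob1}. One small inaccuracy: with your choice of $f$ the cancellation in $(L_{\ab}-A_{\ab}^2)f$ is not exact, because the cross term involving one derivative of each factor leaves a remainder of order $\theta^{-1/2-\alpha}$ near $0$, not $O(\theta^{1/2-\alpha})$ as you claim. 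This is harmless, since $\theta^{-1/2-\alpha}\in L^p$ near $0$ precisely under your hypothesis $\alpha<1/p-1/2$, but note that the paper's choice avoids the issue entirely: $L_{\ab}$ depends on the parameters only through $\alpha^2,\beta^2$, so $L_{\ab}\Psi^{-\alpha,-\beta}=A_{-\alpha,-\beta}^2\,\Psi^{-\alpha,-\beta}$ exactly and $|D_{\ab}^*D_{\ab}f|\le Cf$ with no asymptotic expansion. Finally, your verification that $p\in E(\ab)$ forces $\alpha,\beta>1/p-3/2$ (hence $\theta^{-3/2-\alpha}\notin L^p$ near $0$) is correct and makes explicit a point the paper leaves implicit in its lower bound for $|D_{\alpha+1,\beta+1}D_{\ab}f|$.
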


Theorems \ref{thm:sob1} and \ref{thm:sob2} are the main results of the paper. 
Proving the first one requires actually more technical effort, but we follow
a similar strategy to that in \cite{betancor} aiming at demonstrating that the relevant norms are equivalent. 
Roughly, we shall show estimates of the form
$\|D^{(k)}f\|_{L^p(0,\pi)} \sim \|L_{\ab}^{k/2}f\|_{L^p(0,\pi)}$ or equivalently,
$\|D^{(k)}L_{\ab}^{-k/2}g\|_{L^p(0,\pi)} \sim \|g\|_{L^p(0,\pi)}$. Therefore we will need to
prove essentially two things: first, $L^p$-boundedness of the operators $D^{(k)}L_{\ab}^{-k/2}$
that may be regarded as analogues of the classical higher order Riesz transforms; second, existence of a certain
inversion procedure that will enable us to write bounds of the form 
$\|g\|_{L^p(0,\pi)} \le C \|D^{(k)}L_{\ab}^{-k/2}g\|_{L^p(0,\pi)}$. The latter task will
require introducing some auxiliary operators and studying their $L^p$ mapping properties. The main
technical tool applied repeatedly will be a powerful multiplier-transplantation theorem due to
Muckenhoupt \cite{Muckenhoupt}. The same result was used in \cite{betancor}, but here we apply it in
a slightly simpler way. Another tool we shall need are some approximation properties of Poisson-Jacobi
integrals. In particular, we will obtain certain results of independent interest for the corresponding
maximal operators.

The paper is organized as follows. In Section \ref{sec:tech} we describe in detail the setting and 
give some preparatory results, including the aforementioned multiplier-transplantation theorem and properties
of the Poisson-Jacobi integrals. Sections \ref{sec:bet} and \ref{sec:alternating} are devoted to the
proofs of Theorems \ref{thm:sob1} and \ref{thm:sob2}, respectively. Finally, in Section \ref{sec:fin}
we study some elementary properties of the Sobolev spaces under consideration and their relation to
classical Sobolev spaces on the interval $(0,\pi)$. We also discuss boundedness of the Poisson-Jacobi
integral maximal operator on some of these spaces.

Throughout the paper we use a standard notation with all symbols referring to the measure space
$((0,\pi),d\theta)$. In particular, we write $L^p$ for $L^p(0,\pi)$ and $\|\cdot\|_p$ for
the associated norm. Further, we set
$$
p(\alpha,\beta) := \begin{cases}
											\infty, & \ab \ge -1/2, \\
											-1/\min(\alpha+1/2,\beta+1/2), & \textrm{otherwise}
										\end{cases}
$$
and
$$
\Psi^{\ab}(\theta) := \Big( \sin\frac{\theta}2\Big)^{\alpha+1/2} \Big(\cos\frac{\theta}2\Big)^{\beta+1/2},
	\qquad \theta \in (0,\pi).
$$

\textbf{Acknowledgment.}
The author would like to express his gratitude to Professor Adam Nowak for 
his constant support during the preparation of this paper.

\section{Preliminaries and preparatory results} \label{sec:tech}

The Jacobi trigonometric functions are defined as
\begin{equation*}
\ph(\theta):=\Psi^{\ab}(\theta)\,\P_n^{\ab}(\theta),\qquad\theta\in(0,\pi),
\end{equation*}
where $\P_n^{\ab}$ are the normalized Jacobi trigonometric polynomials given by
$$
\P_n^{\ab}(\theta) := c_n^{\ab}\, P_n^{\ab}(\cos\theta);
$$
here $c_n^{\ab}$ are normalizing constants, and $P_n^{\ab}$denote 
the classical Jacobi polynomials as defined in Szeg\"o's monograph \cite{Sz}.

Recall that the system $\{\ph:n\ge 0\}$ is an orthonormal basis in $L^2$ consisting of eigenfunctions
of the Jacobi operator,
$$
L_{\ab} \ph = \lambda_n^{\ab} \ph, \quad \textrm{where} \quad
	 \lambda_n^{\ab}:=\big( n+ A_{\ab})^2 \quad \textrm{and}\quad A_{\ab}:= \frac{\alpha+\beta+1}2.
$$
Thus $L_{\ab}$ has a non-negative self-adjoint extension which is natural in this context. It is given by
the spectral series
$$
L_{\ab}f= \sum_{n=0}^{\infty} \lambda_{n}^{\ab} a_n^{\ab}(f)\, \ph
$$
on the domain consisting of those $f \in L^2$ for which this series converges in $L^2$. Here and elsewhere
we denote by $a_n^{\ab}(f)$ the Fourier-Jacobi coefficients of a function $f$ whenever the defining
integrals
$$
a_n^{\ab}(f) := \int_0^{\pi} f(\theta)\, \ph(\theta)\, d\theta
$$
exist. For further reference we also denote
$$
S_{\ab} := \spann\{\ph: n \ge 0 \}.
$$
According to \cite[Lemma 2.3]{Stempak}, $S_{\ab}$ is a dense subspace of $L^p$ provided that 
$1 \le p < p(\ab)$.

The setting related to $L_{\ab}$ was investigated recently in \cite{Nowak&Roncal,NS2,Stempak}.
Its importance comes from the fact that it forms a natural environment for transplantation questions
pertaining to expansions based on Jacobi polynomials, see for instance \cite{CNS,Muckenhoupt}.
Actually, the following result plays a crucial role in our work.
It is essentially a special case of the general weighted multiplier-transplantation theorem due to
Muckenhoupt \cite[Theorem 1.14]{Muckenhoupt}, see \cite[Corollary 17.11]{Muckenhoupt} and also 
\cite[Theorem 2.5]{CNS} together with the related comments on pp{.}\,376--377 therein. Here and elsewhere we use the convention that $\phi_{n}^{\ab} \equiv 0$ if $n<0$.
\begin{lem}[Muckenhoupt]\label{1'}
Let $\alpha,\beta,\gamma,\delta>-1$ and let $d\in\Z$. 
Assume that $g(n)$ is a sequence satisfying for sufficiently large $n$ the smoothness condition
$$
g(n)=\sum_{j=0}^{J-1}c_{j}\,n^{-j}+\mathcal O(n^{-J}),
$$
where $J\ge \alpha+\beta+\gamma+\delta+6$ and $c_j$ are fixed constants.

Then for each $p$ satisfying $p'(\gamma,\delta) < p < p(\alpha,\beta)$ the operator 
$$
f\mapsto\sum_{n=0}^{\infty}g(n)\,a_n^{\ab}(f)\,\phi_{n+d}^{\gamma,\delta}(\theta),\qquad f\in S_{\ab},
$$ 
extends to a bounded operator on $L^p(0,\pi)$. 

\end{lem}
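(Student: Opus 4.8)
The plan is to deduce this statement from Muckenhoupt's general weighted transplantation--multiplier theorem by passing from the trigonometric system $\{\phi_n^{\ab}\}$ to the classical Jacobi polynomials $\{P_n^{\ab}\}$ through the substitution $x=\cos\theta$. Under this change of variable the measure $d\theta$ becomes $(1-x^2)^{-1/2}\,dx$, and since $\phi_n^{\ab}(\theta)=\Psi^{\ab}(\theta)\,c_n^{\ab}P_n^{\ab}(\cos\theta)$ with $\Psi^{\ab}(\theta)=(\sin\frac\theta2)^{\alpha+1/2}(\cos\frac\theta2)^{\beta+1/2}$, the factor $\Psi^{\ab}$ together with the Jacobian is precisely a power of $(1-x)$ and $(1+x)$. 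First I would record this dictionary carefully: the $L^p((0,\pi),d\theta)$ norm of a function $F(\theta)=h(\cos\theta)$ equals a weighted $L^p((-1,1),dx)$ norm of $h$ with a power weight whose exponents are affine functions of $\alpha,\beta$ and $p$. Consequently the operator in the statement is carried to a multiplier-transplantation operator acting between Jacobi polynomial expansions of type $(\alpha,\beta)$ and type $(\gamma,\delta)$, with the shift $d$ in the index and with power weights on both sides, which is exactly the object treated in \cite[Theorem 1.14]{Muckenhoupt}.

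The second step is to verify that the sequence $g(n)$ meets Muckenhoupt's smoothness hypothesis. The assumed asymptotic expansion $g(n)=\sum_{j=0}^{J-1}c_j\,n^{-j}+\mathcal O(n^{-J})$ furnishes control of the forward differences of $g$ up to order $J$, so $g$ qualifies as a Marcinkiewicz-type symbol; the requirement $J\ge\alpha+\beta+\gamma+\delta+6$ is tailored so that this discrete smoothness matches the number of terms demanded by \cite[Corollary 17.11]{Muckenhoupt} for the four parameters involved. I would also note the harmless role of the convention $\phi_n^{\ab}\equiv0$ for $n<0$, which merely truncates the output series when $d<0$ and does not affect boundedness.

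The third step is to match the range of $p$. The upper bound $p<p(\ab)$ is forced already at the level of the definition: $S_{\ab}$ is dense in $L^p$ precisely for $p<p(\ab)$, which is where the operator is initially defined, and equivalently $\phi_n^{\ab}\in L^{p'}$ so that the coefficients $a_n^{\ab}(f)$ make sense. Dually, boundedness of the adjoint, which transplants in the opposite direction, requires $\phi_{n+d}^{\gamma,\delta}\in L^p$, that is $p'<p(\gamma,\delta)$, giving the lower bound $p>p'(\gamma,\delta)$. In the weighted polynomial picture these two conditions are exactly the admissibility constraints on the power-weight exponents in Muckenhoupt's theorem, so the entire interval $p'(\gamma,\delta)<p<p(\ab)$ is covered.

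The main obstacle I anticipate is essentially bookkeeping: rewriting Muckenhoupt's weighted hypotheses in a form directly comparable to ours and checking that the power-weight exponents produced by $x=\cos\theta$ land inside the admissible region for every $p$ in the asserted range, uniformly in the shift $d$ and the four parameters. Once this dictionary between the trigonometric and polynomial normalizations is pinned down, together with the precise correspondence between the asymptotic order $J$ and Muckenhoupt's smoothness index, the statement follows directly; the comments on \cite[pp.\,376--377]{CNS} and \cite[Theorem 2.5]{CNS} provide exactly this reconciliation.
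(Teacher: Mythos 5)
Your proposal follows essentially the same route as the paper, which gives no self-contained proof of this lemma but, exactly as you propose, derives it as a special case of Muckenhoupt's weighted multiplier--transplantation theorem \cite[Theorem 1.14 and Corollary 17.11]{Muckenhoupt}, with the passage between the trigonometric functions $\phi_n^{\alpha,\beta}$ and the Jacobi polynomial setting, together with the reconciliation of hypotheses and parameter ranges, supplied by \cite[Theorem 2.5 and pp.~376--377]{CNS}. One caution for when you carry out the bookkeeping: in your third step the equivalences are misstated, since density of $S_{\alpha,\beta}$ in $L^p$ (that is, $p<p(\alpha,\beta)$) is not equivalent to $\phi_n^{\alpha,\beta}\in L^{p'}$ (which amounts to $p>p'(\alpha,\beta)$), and $\phi_{n+d}^{\gamma,\delta}\in L^p$ translates to $p<p(\gamma,\delta)$, not to $p'<p(\gamma,\delta)$, so the identification of the pencil endpoints must come from translating Muckenhoupt's weight conditions themselves rather than from these heuristics.
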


Observe that for $f\in S_{\ab}$ there are only finitely many non-zero terms in the last series.
Moreover, since $S_{\ab}$ is dense in $L^p$ for $p < p(\ab)$, the extension from Lemma \ref{1'} is unique.

The Poisson-Jacobi semigroup $\{\exp(-tL_{\ab}^{1/2})\}_{t\ge 0}$ can be written in $L^2$ by means
of the spectral theorem as
$$
H_t^{\ab}f = \sum_{n=0}^{\infty} \exp\Big(-t\sqrt{\lambda_{n}^{\ab}}\Big)\, a_n^{\ab}(f)\, \ph.
$$
This series converges in fact pointwise and, moreover, may serve as a pointwise definition of $H_t^{\ab}f$,
$t>0$, for more general $f$. In particular, for $f \in L^p$, $p> p'(\ab)$, the coefficients $a_n^{\ab}(f)$
exist and grow polynomially in $n$ (see \cite[Theorem 2.1]{Stempak}) which together with the estimate
(cf.\ \cite[(7.32.2)]{Sz})
\begin{equation} \label{bnd}
|\phi_{n}^{\ab}(\theta)| \le C \, \Psi^{\ab}(\theta)\, (n+1)^{\alpha+\beta+2}, \qquad \theta \in (0,\pi),
 \quad n \ge 0,
\end{equation}
implies pointwise convergence of the series in question (actually, the growth property of $a_n^{\ab}(f)$
is a direct consequence of \eqref{bnd}). Further, $H_t^{\ab}$ has an integral representation 
$$
H_t^{\ab}f(\theta) = \int_0^{\pi} H_t^{\ab}(\theta,\varphi)f(\theta)\, d\theta, \qquad t >0, \quad 
	\theta \in (0,\pi),
$$
valid for $f \in L^p$ with $p$ as before. The integral kernel here is directly related to the Poisson-Jacobi
kernel $\mathcal{H}_t^{\ab}(\theta,\varphi)$ in the context of expansions into $\P_n^{\ab}$ (see
\cite[Section 2]{NS2}),
\begin{equation} \label{rel_p}
H_t^{\ab}(\theta,\varphi) = \Psi^{\ab}(\theta)\Psi^{\ab}(\varphi)\, \mathcal{H}_t^{\ab}(\theta,\varphi).
\end{equation}
Thus sharp estimates of $\mathcal{H}_t^{\ab}(\theta,\varphi)$ obtained in \cite[Theorem 5.2]{NS1} and
\cite[Theorem 6.1]{parameters} imply readily sharp estimates for $H_t^{\ab}(\theta,\varphi)$. Further,
known results on the maximal operator associated with 
$\mathcal{H}_t^{\ab}(\theta,\varphi)$ imply the following.
\begin{prop} \label{prop:max}
Let $\ab > -1$ and let $p \in E(\ab)$. Then the maximal operator
$$
H_{*}^{\ab}f := \sup_{t>0} \big|H_t^{\ab}f\big|
$$
is bounded on $L^p(0,\pi)$.
\end{prop}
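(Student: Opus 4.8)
The plan is to transfer the estimate to the ``polynomial'' Jacobi setting, where the corresponding maximal operator is controlled by the sharp kernel bounds recalled above and its weighted $L^p$ boundedness is known. Write $d\mu_{\ab} := (\Psi^{\ab})^2\,d\theta$ for the measure with respect to which the normalized polynomials $\P_n^{\ab}$ are orthonormal, and let
$$
\mathcal{H}_*^{\ab} g := \sup_{t>0} \Big| \int_0^{\pi} \mathcal{H}_t^{\ab}(\theta,\varphi)\, g(\varphi)\, d\mu_{\ab}(\varphi) \Big|
$$
be the Poisson--Jacobi maximal operator in that setting. First I would use the pointwise identity \eqref{rel_p}: absorbing the factor $\Psi^{\ab}(\varphi)$ into $d\mu_{\ab}$ turns the integral representation of $H_t^{\ab}$ into
$$
H_t^{\ab} f = \Psi^{\ab}\cdot \mathcal{H}_t^{\ab}\!\Big( \frac{f}{\Psi^{\ab}} \Big), \qquad t>0,
$$
and, since $\Psi^{\ab}$ does not depend on $t$, taking the supremum gives $H_*^{\ab} f = \Psi^{\ab}\cdot \mathcal{H}_*^{\ab}(f/\Psi^{\ab})$.

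The multiplication map $h \mapsto \Psi^{\ab} h$ is an isometric isomorphism of $L^p\big((\Psi^{\ab})^{p-2}\,d\mu_{\ab}\big)$ onto $L^p(0,\pi)$, because $(\Psi^{\ab})^{p}\,d\theta = (\Psi^{\ab})^{p-2}\,d\mu_{\ab}$. Consequently the asserted boundedness of $H_*^{\ab}$ on $L^p(0,\pi)$ is equivalent to the boundedness of $\mathcal{H}_*^{\ab}$ on the weighted space $L^p(w\,d\mu_{\ab})$ with the power weight $w := (\Psi^{\ab})^{p-2}$. Now I would invoke the known mapping properties of $\mathcal{H}_*^{\ab}$: the sharp estimates of $\mathcal{H}_t^{\ab}(\theta,\varphi)$ from \cite[Theorem 5.2]{NS1} and \cite[Theorem 6.1]{parameters}, combined with standard Calder\'on--Zygmund theory on the space of homogeneous type $\big((0,\pi),d\mu_{\ab}\big)$, give that $\mathcal{H}_*^{\ab}$ is bounded on $L^p(w\,d\mu_{\ab})$ for every Muckenhoupt weight $w \in A_p(d\mu_{\ab})$, $1<p<\infty$.

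It then remains to check that $w = (\Psi^{\ab})^{p-2}$ belongs to $A_p(d\mu_{\ab})$ exactly when $p \in E(\ab)$. Near $\theta = 0$ one has $d\mu_{\ab} \sim \theta^{2\alpha+1}\,d\theta$ and $w \sim \theta^{(\alpha+1/2)(p-2)}$, and similarly near $\theta = \pi$ with $\beta$ in place of $\alpha$; evaluating the $A_p(d\mu_{\ab})$ balance on the intervals $(0,r)$ and $(\pi-r,\pi)$ reduces membership to the finiteness conditions $(\alpha+1/2)p > -1$ and $(\alpha+3/2)p > 1$ (together with the two analogous inequalities in $\beta$), which together amount precisely to $p'(\ab) < p < p(\ab)$, i.e.\ $p \in E(\ab)$, and impose nothing beyond $1<p<\infty$ when $\ab \ge -1/2$. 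I expect the genuine difficulty to lie not in this transference but in having the weighted bound for $\mathcal{H}_*^{\ab}$ available at the \emph{full} parameter range $\ab>-1$: this is exactly where the two-parameter sharp kernel estimates of \cite{parameters}, which go beyond the range covered in \cite{NS1}, are indispensable. Granting that input, the identity from \eqref{rel_p} and the elementary power-weight computation finish the proof, with the endpoint restrictions emerging automatically as the $A_p$ condition breaks down outside $E(\ab)$.
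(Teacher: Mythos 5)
Your proposal is correct and follows essentially the same route as the paper's proof: both use the kernel relation \eqref{rel_p} to rewrite $H_{*}^{\alpha,\beta}f = \Psi^{\alpha,\beta}\,\mathcal{H}_{*}^{\alpha,\beta}(f/\Psi^{\alpha,\beta})$, transfer the problem to weighted $L^p$ boundedness of the polynomial-setting maximal operator $\mathcal{H}_{*}^{\alpha,\beta}$ with respect to the measure $(\Psi^{\alpha,\beta})^2\,d\theta$ and the power weight $(\Psi^{\alpha,\beta})^{p-2}$, invoke the known $A_p$-weighted bounds (from \cite{NS1} and, for the full range $\alpha,\beta>-1$, \cite{parameters}), and verify that this weight is in the relevant Muckenhoupt class exactly when $p \in E(\alpha,\beta)$. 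Your hand computation of the $A_p$ conditions $(\alpha+1/2)p>-1$ and $(\alpha+3/2)p>1$ agrees with the paper's quoted double-power-weight characterization, so no gap remains.
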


\begin{proof}
Let $1<p<\infty$. By \cite[Corollary 2.5]{NS1} and \cite[Corollary 5.2]{parameters}, the maximal
operator
$$
\mathcal{H}_{*}^{\ab}f(\theta) := \sup_{t>0}\bigg| \int_0^{\pi} \mathcal{H}_t^{\ab}(\theta,\varphi)\,
	f(\varphi) \, \Psi^{2\alpha+1/2,2\beta+1/2}(\varphi)\,d\varphi \bigg|
$$
is bounded on $L^p(w\Psi^{2\alpha+1/2,2\beta+1/2})$ for $w \in A_p^{\ab}$, where $A_p^{\ab}$ stands for
the Muckenhoupt class of $A_p$ weights related to the measure 
$\Psi^{2\alpha+1/2,2\beta+1/2}(\theta)\,d\theta$ in $(0,\pi)$, see \cite[Section 1]{NS1}.
Letting $w_{r,s}(\theta) := \Psi^{r-1/2,s-1/2}(\theta)$ be a double-power weight, the condition
$w_{r,s} \in A_p^{\ab}$ is equivalent to saying that $-(2\alpha+2)<r<(2\alpha+2)(p-1)$ and
$-(2\beta+2) < s < (2\beta+2)(p-1)$. The conclusion follows by combining the boundedness of
$\mathcal{H}_{*}^{\ab}$ in double-power weighted $L^p$ with the relation, see \eqref{rel_p},
$H_{*}^{\ab}f = \Psi^{\ab} \mathcal{H}_{*}^{\ab}(\Psi^{-\alpha-1,-\beta-1}f)$.
\end{proof}
By standard arguments, Proposition \ref{prop:max} leads to norm and almost everywhere boundary convergence
of the Poisson-Jacobi semigroup; see the proof of Theorem \ref{thm:conv} below.

Closely related to the Poisson-Jacobi semigroup is the Poisson-Jacobi integral
$$
U_r^{\ab}(f) := \sum_{n=0}^{\infty} r^n\, a_n^{\ab}(f)\, \phi_{n}^{\ab}, \qquad 0<r<1,
$$
and its `spectral' variant
$$
\widetilde{U}_r^{\ab}(f) := 
	\sum_{n=0}^{\infty} r^{|n+A_{\ab}|}\, a_n^{\ab}(f)\, \phi_{n}^{\ab}, \qquad 0<r<1.
$$
The following result extends \cite[Theorem 2.2]{tohoku} in the ultraspherical setting. Some parallel results
obtained recently by different methods can be found in \cite{CU}. 
\begin{thm} \label{thm:conv}
Let $\ab > -1$ and let $p \in E(\ab)$. Then
\begin{enumerate}
\item[(a)] the maximal operators
$$
f \mapsto \sup_{0<r<1}\big|U_r^{\ab}f\big| \qquad \textrm{and} \qquad
f \mapsto \sup_{0<r<1}\big|\widetilde{U}_r^{\ab}f\big| 
$$
are bounded on $L^p(0,\pi)$;
\item[(b)] given any $f\in L^p(0,\pi)$,
$$
U_r^{\ab}f(\theta) \to f(\theta) \qquad \textrm{and} \qquad \widetilde{U}_r^{\ab}f(\theta) \to f(\theta)
	\qquad \textrm{for a.a.}\;\theta\in(0,\pi),
$$
as $r \to 1^{-}$;
\item[(c)] there exists $C>0$ depending only on $\ab$ and $p$ such that
$$
\big\|U_r^{\ab}f\big\|_{L^p(0,\pi)} + \big\|\widetilde{U}_r^{\ab}f\big\|_{L^p(0,\pi)} \le C 
	\|f\|_{L^p(0,\pi)}
$$
for all $0<r<1$ and $f \in L^p(0,\pi)$;
\item[(d)]
for each $f \in L^p(0,\pi)$,
$$
\big\|U_r^{\ab}f - f\big\|_{L^p(0,\pi)} \to 0 \qquad \textrm{and} \qquad 
\big\|\widetilde{U}_r^{\ab}f - f\big\|_{L^p(0,\pi)} \to 0
$$
as $r \to 1^{-}$.
\end{enumerate}
\end{thm}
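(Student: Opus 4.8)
The plan is to reduce the entire statement to the maximal bound for the Poisson--Jacobi semigroup furnished by Proposition~\ref{prop:max}. The crucial observation is that the spectral variant is nothing but the semigroup reparametrized: setting $t=-\log r$ and recalling that $\sqrt{\lambda_n^{\ab}}=|n+A_{\ab}|$, one has $\widetilde{U}_r^{\ab}f=H_t^{\ab}f$, so that $\sup_{0<r<1}|\widetilde{U}_r^{\ab}f|=H_*^{\ab}f$ as $t$ ranges over $(0,\infty)$. Hence the maximal bound in (a) for the tilde operator is precisely Proposition~\ref{prop:max}. The uniform bound for $\widetilde{U}_r^{\ab}$ in (c) is then immediate, since $\|\widetilde{U}_r^{\ab}f\|_p\le\|H_*^{\ab}f\|_p\le C\|f\|_p$; the a.e.\ convergence in (b) follows from the maximal inequality together with the density of $S_{\ab}$ in $L^p$ (valid because $p<p(\ab)$ for $p\in E(\ab)$), on which $\widetilde{U}_r^{\ab}f\to f$ trivially as a finite sum with $r^{|n+A_{\ab}|}\to 1$; and the norm convergence in (d) follows from (c) by the same density argument.

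For the genuine Poisson--Jacobi integral $U_r^{\ab}$ I would compare it with $\widetilde{U}_r^{\ab}$ through the termwise identity $r^n=r^{-A_{\ab}}r^{|n+A_{\ab}|}$, which holds whenever $n+A_{\ab}\ge 0$, i.e.\ for all $n\ge 1$ and also for $n=0$ when $\alpha+\beta\ge-1$. This gives $U_r^{\ab}f=r^{-A_{\ab}}\widetilde{U}_r^{\ab}f$ when $\alpha+\beta\ge-1$, and $U_r^{\ab}f=r^{-A_{\ab}}\widetilde{U}_r^{\ab}f+\big(1-r^{-2A_{\ab}}\big)a_0^{\ab}(f)\,\phi_0^{\ab}$ when $\alpha+\beta<-1$, the single correction arising from the $n=0$ term. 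Since the factor $r^{-A_{\ab}}$ is not uniformly bounded on $(0,1)$, I would split the supremum at $r=1/2$. On $[1/2,1)$ the factors $r^{-A_{\ab}}$ and $r^{-2A_{\ab}}$ are bounded, so $\sup_{1/2\le r<1}|U_r^{\ab}f|\le C\big(H_*^{\ab}f+|a_0^{\ab}(f)|\,|\phi_0^{\ab}|\big)$; here $\phi_0^{\ab}\in L^p$ and $|a_0^{\ab}(f)|\le C\|f\|_p$ because $p\in E(\ab)$ forces both $\Psi^{\ab}\in L^p$ and $\Psi^{\ab}\in L^{p'}$.

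On $(0,1/2]$ I would estimate the series directly: $\sup_{0<r\le 1/2}|U_r^{\ab}f(\theta)|\le\sum_{n\ge 0}2^{-n}\,|a_n^{\ab}(f)|\,|\phi_n^{\ab}(\theta)|$. Using \eqref{bnd} to bound $\|\phi_n^{\ab}\|_p$ polynomially in $n$, and the H\"older bound $|a_n^{\ab}(f)|\le\|f\|_p\,\|\phi_n^{\ab}\|_{p'}$ (again with polynomial growth via \eqref{bnd}), the geometric factor $2^{-n}$ absorbs the polynomial growth and yields an $L^p$ bound by $C\|f\|_p$. Combining the two ranges of $r$ proves the maximal bound in (a) for $U_r^{\ab}$, whence (c), (b) and (d) follow exactly as for the tilde operator: on the dense subspace $S_{\ab}$ one has $U_r^{\ab}f\to f$ as a finite sum with $r^n\to 1$, and the maximal inequality upgrades this to a.e.\ and to norm convergence for every $f\in L^p$.

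The main obstacle is precisely the comparison in the second paragraph. The multiplier relating $U_r^{\ab}$ to the semigroup carries the factor $r^{-A_{\ab}}$, which blows up as $r\to 0^{+}$ and therefore cannot be controlled uniformly by $H_*^{\ab}$. The remedy---isolating the small-$r$ range and dominating it by a crude but summable termwise estimate, while near $r=1$ leaning on Proposition~\ref{prop:max} and treating the isolated $n=0$ correction separately when $\alpha+\beta<-1$---is the one genuinely non-formal point; everything else is the standard maximal-operator-plus-density machinery.
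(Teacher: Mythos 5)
Your proposal is correct and follows essentially the same route as the paper: the tilde operator is identified with the semigroup $H_t^{\ab}$ ($t=-\log r$) so that Proposition~\ref{prop:max} applies, the supremum for $U_r^{\ab}$ is split at $r=1/2$ with a crude termwise estimate via \eqref{bnd} for small $r$ and the comparison $U_r^{\ab}f=r^{-A_{\ab}}\widetilde{U}_r^{\ab}f+\big(1-r^{|A_{\ab}|-A_{\ab}}\big)a_0^{\ab}(f)\phi_0^{\ab}$ for large $r$, and then (b), (c), (d) follow by the standard maximal-operator-plus-density argument. Your case split according to the sign of $\alpha+\beta+1$ is just an unpacked form of the paper's single correction-term formula, so there is no substantive difference.
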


\begin{proof}
Item (c) is an obvious consequence of (a). Then (b) and (d) follow from (a) and (c) and the density of
$S_{\ab}$ in $L^p$. Thus it remains to prove (a).

In case of $\widetilde{U}_r^{\ab}$, the conclusion follows immediately from Proposition \ref{prop:max}
because $\widetilde{U}_r^{\ab} = H_t^{\ab}$, where $t=-\log r$. To treat the other case, we split the
supremum according to $r \le 1/2$ and $r>1/2$, and denote the resulting maximal operators by
$U_{*,0}^{\ab}$ and $U_{*,1}^{\ab}$, respectively. Then using \eqref{bnd} and H\"older's inequality
we get
\begin{align*}
|U_{*,0}^{\ab}(f)(\theta)| & \le \sum_{n=0}^{\infty} 2^{-n} |a_n^{\ab}(f)| |\phi_n^{\ab}(\theta)| \\
& \le C \|f\|_p \|\Psi^{\ab}\|_{p'} \Psi^{\ab}(\theta) \sum_{n=0}^{\infty} 2^{-n} (n+1)^{2(\alpha+\beta+2)} \\
& \le C \|f\|_p \Psi^{\ab}(\theta).
\end{align*}
This implies the boundedness of $U_{*,0}^{\ab}$. To deal with $U_{*,1}^{\ab}$, we write
$$
U_r^{\ab}(f) = r^{-A_{\ab}} \widetilde{U}_r^{\ab}(f) + \big( 1-r^{|A_{\ab}|-A_{\ab}}\big) \,a_0^{\ab}(f)
\,	\phi_0^{\ab}
$$
and use the $L^p$-boundedness of $\widetilde{U}_r^{\ab}$ and H\"older's inequality. It follows that
$U_{*,1}^{\ab}$ is $L^p$-bounded.
\end{proof}

We remark that a more detailed analysis of the maximal operators of the Poisson-Jacobi integrals and of
boundary convergence of those integrals is possible via the above mentioned sharp estimates for
$H_t^{\ab}(\theta,\varphi)$. Assuming for instance that $\ab \ge -1/2$, one can easily check
by means of \cite[Theorem 6.1]{parameters} that the integral 
kernel $U_r^{\ab}(\theta,\varphi)$ of $U_r^{\ab}$ satisfies
\begin{equation} \label{str}
0 < U_r^{\ab}(\theta,\varphi) \le C \frac{1-r}{(1-r)^2+(\theta-\varphi)^2}, \qquad \theta,\varphi \in (0,\pi),
\quad 0<r<1.
\end{equation}
This extends the estimate for the Poisson-ultraspherical kernel used in \cite{tohoku}, see also
\cite[Lemma 1,\,p.\,27]{MuS}. Moreover, \eqref{str} shows that when $\ab \ge -1/2$, the maximal operators
from Theorem \ref{thm:conv} (a) are controlled by the centered Hardy-Littlewood maximal operator
restricted to $(0,\pi)$. Consequently, (b) of Theorem \ref{thm:conv} holds for $f\in L^1$. Independently,
\eqref{str} gives also (c), and so (d), of Theorem \ref{thm:conv} for $f \in L^1$, still under the assumption
$\ab \ge -1/2$.

Finally, we gather some facts about potential operators associated to $L_{\ab}$. When $\alpha+\beta\neq -1$,
we consider the Riesz type potentials defined for $f\in L^2$ by
$$
L_{\ab}^{-\sigma}f = \sum_{n=0}^{\infty} \big(\lambda_n^{\ab}\big)^{-\sigma} \, a_n^{\ab}(f)\, \phi_n^{\ab}.
$$
In case $\alpha+\beta=-1$ we have $\lambda_0^{\ab}=0$ and thus consider instead the Bessel type potentials
given for $f\in L^2$ by
$$
(\id + L_{\ab})^{-\sigma}f = \sum_{n=0}^{\infty}  
	\big(1+ \lambda_n^{\ab}\big)^{-\sigma} \, a_n^{\ab}(f)\, \phi_n^{\ab}
$$
(notice that this definition makes actually sense for all $\ab > -1$). Clearly, these potentials are
bounded on $L^2$. Further, both $L_{\ab}^{-\sigma}$ and $(\id + L_{\ab})^{-\sigma}$ possess integral
representations that are valid not only for $f \in L^2$, but also for $f \in L^p$ provided that
$p> p'(\ab)$, see \cite{Nowak&Roncal} for more details.

The following result ensures that the definition of the potential spaces $\mathcal{L}_{\ab}^{p,s}$ from
Section \ref{sec:intro} is indeed correct.
\begin{prop} \label{prop:1'}
Let $\ab > -1$ and let $\sigma > 0$. Assume that $p \in E(\ab)$. Then
\begin{enumerate}
\item[(a)] $L_{\ab}^{-\sigma}$ is bounded and one to one on $L^p(0,\pi)$ when $\alpha+\beta \neq -1$;
\item[(b)] $(\id+L_{\ab})^{-\sigma}$ is bounded and one to one on $L^p(0,\pi)$.
\end{enumerate}
\end{prop}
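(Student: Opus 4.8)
\emph{Plan.} I would prove boundedness and injectivity separately, the former being the substantial part. Write $\Lambda=L_{\alpha,\beta}$ in case (a) and $\Lambda=\id+L_{\alpha,\beta}$ in case (b), so that $\Lambda^{-\sigma}$ acts as the Fourier--Jacobi multiplier $\mu_n:=(\lambda_n^{\alpha,\beta})^{-\sigma}$, respectively $(1+\lambda_n^{\alpha,\beta})^{-\sigma}$; in both situations $\mu_n>0$ and $\mu_n\sim n^{-2\sigma}$. The first temptation is to apply Lemma \ref{1'} with $d=0$, $\gamma=\alpha$, $\delta=\beta$, but the non-integer power asymptotics $\mu_n\sim n^{-2\sigma}$ does not match the integer-power expansion required there. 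This mismatch is the \emph{main obstacle}, and I would circumvent it by subordination to the Poisson--Jacobi semigroup rather than by a direct multiplier argument.

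For boundedness in case (a) I would start from the spectral identity $L_{\alpha,\beta}^{-\sigma}=\frac{1}{\Gamma(2\sigma)}\int_{0}^{\infty}t^{2\sigma-1}H_{t}^{\alpha,\beta}\,dt$, which holds since $\lambda_n^{-\sigma}=\frac{1}{\Gamma(2\sigma)}\int_0^\infty t^{2\sigma-1}e^{-t\sqrt{\lambda_n}}\,dt$. Splitting the $t$-integral at $t=1$, the part over $(0,1)$ is controlled by the uniform bound $\|H_t^{\alpha,\beta}f\|_p\le C\|f\|_p$ from Theorem \ref{thm:conv}(c) together with $\int_0^1 t^{2\sigma-1}\,dt<\infty$. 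The part over $(1,\infty)$ needs exponential decay of $\|H_t^{\alpha,\beta}\|_{p\to p}$, which is the technical heart. Here I would use that $\alpha+\beta\neq-1$ forces $\delta:=\min_{n\ge0}|n+A_{\alpha,\beta}|=|A_{\alpha,\beta}|>0$, whence $\|H_t^{\alpha,\beta}f\|_2\le e^{-\delta t}\|f\|_2$ by Parseval. Combining this with the smoothing estimate $\|H_t^{\alpha,\beta}f\|_p\le C_t\|f\|_2$ for $p<p(\alpha,\beta)$, obtained from \eqref{bnd} and the Cauchy--Schwarz inequality (using $\Psi^{\alpha,\beta}\in L^p$), and writing $H_t^{\alpha,\beta}=H_1^{\alpha,\beta}\circ H_{t-1}^{\alpha,\beta}$, one gets $\|H_t^{\alpha,\beta}\|_{p\to p}\le Ce^{-\delta t}$ for $t\ge1$ and $2\le p<p(\alpha,\beta)$; the range $p'(\alpha,\beta)<p<2$ then follows by duality and self-adjointness of $H_t^{\alpha,\beta}$, since $E(\alpha,\beta)$ is symmetric under $p\mapsto p'$. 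The tail integral converges and case (a) is complete.

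For case (b) I would reduce to (a) whenever possible. If $\alpha+\beta\neq-1$, the factorization $(\id+L_{\alpha,\beta})^{-\sigma}=L_{\alpha,\beta}^{-\sigma}\circ M$ holds, where $M$ has multiplier $(1+\lambda_n^{-1})^{-\sigma}=(1+(n+A_{\alpha,\beta})^{-2})^{-\sigma}$; this sequence expands in integer powers of $n$, so $M$ is bounded by Lemma \ref{1'} and boundedness follows from (a). If $\alpha+\beta=-1$, then $\lambda_0=0$ and I would split off the rank-one projection $P_0f=a_0^{\alpha,\beta}(f)\,\phi_0^{\alpha,\beta}$, bounded on $L^p$ because $\phi_0^{\alpha,\beta}\in L^p\cap L^{p'}$ for $p\in E(\alpha,\beta)$. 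On the complementary range $Q=\id-P_0$ one has $\lambda_n=n^2\ge1$, so $H_t^{\alpha,\beta}Q$ decays like $e^{-t}$ and the subordination argument of (a), now with $\delta=1$, yields boundedness of $L_{\alpha,\beta}^{-\sigma}Q$; the same factorization then gives $(\id+L_{\alpha,\beta})^{-\sigma}Q=M'\,L_{\alpha,\beta}^{-\sigma}Q$ with $M'$ bounded by Lemma \ref{1'}, and $(\id+L_{\alpha,\beta})^{-\sigma}=P_0+M'\,L_{\alpha,\beta}^{-\sigma}Q$ is bounded.

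Finally, injectivity is straightforward once boundedness is in hand. Since $\Lambda^{-\sigma}$ is bounded on both $L^p$ and $L^{p'}$ (both exponents lie in $E(\alpha,\beta)$) and is formally self-adjoint, for $f\in L^p$ and each $m\ge0$ one has $a_m^{\alpha,\beta}(\Lambda^{-\sigma}f)=\langle f,\Lambda^{-\sigma}\phi_m^{\alpha,\beta}\rangle=\mu_m\,a_m^{\alpha,\beta}(f)$, using $\Lambda^{-\sigma}\phi_m^{\alpha,\beta}=\mu_m\phi_m^{\alpha,\beta}$ and $\phi_m^{\alpha,\beta}\in L^{p'}$. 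If $\Lambda^{-\sigma}f=0$ then, because $\mu_m>0$, all coefficients $a_m^{\alpha,\beta}(f)$ vanish; since $S_{\alpha,\beta}$ is dense in $L^{p'}=(L^p)^{*}$ for $p\in E(\alpha,\beta)$ (as $p'<p(\alpha,\beta)$), the function $f$ annihilates a dense subspace and hence $f=0$. Thus $\Lambda^{-\sigma}$ is one to one, completing both (a) and (b).
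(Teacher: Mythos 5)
Your proposal is correct, but the boundedness half follows a genuinely different route from the paper's, so it is worth comparing. The paper disposes of boundedness essentially by citation: it invokes the kernel-based estimates for Jacobi potential operators in \cite[Theorem 2.4]{Nowak&Roncal} (together with the comments there on Bessel--Jacobi potentials), adding only a parenthetical remark that Lemma~\ref{1'} could also be used. Your argument is instead self-contained relative to the tools already established in Section~\ref{sec:tech}: you subordinate, writing $\Lambda^{-\sigma}=\frac{1}{\Gamma(2\sigma)}\int_0^\infty t^{2\sigma-1}H_t^{\alpha,\beta}\,dt$ on $S_{\alpha,\beta}$, control $t<1$ by the uniform bound of Theorem~\ref{thm:conv}(c) (equivalently Proposition~\ref{prop:max}), and get exponential decay for $t\ge1$ from the $L^2$ spectral gap $\min_n|n+A_{\alpha,\beta}|=|A_{\alpha,\beta}|>0$ (valid since $A_{\alpha,\beta}>-1/2$ and $\alpha+\beta\neq-1$), the $L^2\to L^p$ smoothing estimate furnished by \eqref{bnd}, the semigroup law, and duality via the symmetry of $E(\alpha,\beta)$ under conjugation. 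Your opening observation is also well taken, and is if anything a point against the paper's parenthetical remark: for non-integer $2\sigma$ the multiplier $(\lambda_n^{\alpha,\beta})^{-\sigma}\sim n^{-2\sigma}$ genuinely fails the integer-power expansion hypothesis of Lemma~\ref{1'} (unless $2\sigma\ge J$), so that lemma cannot be applied directly; your workaround --- the factorization $(\id+L_{\alpha,\beta})^{-\sigma}=L_{\alpha,\beta}^{-\sigma}\circ M$ in which only the harmless correction multiplier $M$, analytic in $1/n$, is fed to Lemma~\ref{1'}, plus the rank-one splitting $P_0+Q$ in the degenerate case $\alpha+\beta=-1$ --- is a clean way to combine the two methods. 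The injectivity half of your argument is essentially identical to the paper's: your coefficient identity $a_n^{\alpha,\beta}(\Lambda^{-\sigma}f)=\mu_n\,a_n^{\alpha,\beta}(f)$ for $f\in L^p$, obtained by density of $S_{\alpha,\beta}$ and boundedness of the two functionals, is \eqref{id}, and your closing step (a function in $L^p$ whose Fourier--Jacobi coefficients all vanish is zero, by density of $S_{\alpha,\beta}$ in $L^{p'}$) is precisely Lemma~\ref{lem:wsp}. In sum, the paper buys brevity by outsourcing the analytic core to \cite{Nowak&Roncal}, while you buy self-containedness, at the cost of the longer semigroup argument.
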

To prove this we need a simple auxiliary property.
\begin{lem} \label{lem:wsp}
Let $\ab$ and $p$ be as in Proposition \ref{prop:1'}. Assume that $f \in L^p(0,\pi)$. 
If $a_n^{\ab}(f)=0$ for all $n \ge 0$, then $f \equiv 0$.
\end{lem}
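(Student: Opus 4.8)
The plan is to read off this lemma as an almost immediate consequence of the approximation properties of the Poisson-Jacobi integral established in Theorem \ref{thm:conv}. First I would record that the hypotheses are consistent: since $p \in E(\ab)$ we have in particular $p > p'(\ab)$, hence $p' < p(\ab)$ and $\ph \in L^{p'}$, so that the coefficients
$$
a_n^{\ab}(f) = \int_0^{\pi} f(\theta)\, \ph(\theta)\, d\theta
$$
are indeed well defined for every $f \in L^p$ and every $n \ge 0$. This is the only point where the membership $p \in E(\ab)$ is used in an essential way.

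Next I would observe that the assumption $a_n^{\ab}(f) = 0$ for all $n \ge 0$ trivializes the Poisson-Jacobi integral: for each fixed $0 < r < 1$,
$$
U_r^{\ab}(f) = \sum_{n=0}^{\infty} r^{n}\, a_n^{\ab}(f)\, \phi_{n}^{\ab} \equiv 0.
$$
It then suffices to invoke the convergence $U_r^{\ab}(f) \to f$. Concretely, Theorem \ref{thm:conv}(d) yields $\big\|U_r^{\ab}(f) - f\big\|_{p} \to 0$ as $r \to 1^{-}$; since $U_r^{\ab}(f) \equiv 0$, the quantity $\big\|U_r^{\ab}(f) - f\big\|_{p} = \|f\|_{p}$ is independent of $r$, so letting $r \to 1^{-}$ forces $\|f\|_{p} = 0$, i.e.\ $f \equiv 0$. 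Alternatively one may argue pointwise via Theorem \ref{thm:conv}(b): the almost everywhere convergence $U_r^{\ab}(f)(\theta) \to f(\theta)$ combined with $U_r^{\ab}(f) \equiv 0$ gives $f(\theta) = 0$ for a.a.\ $\theta \in (0,\pi)$.

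I do not expect any genuine obstacle here, since all the technical work has already been absorbed into Theorem \ref{thm:conv}; the lemma is essentially a corollary of the $L^p$-convergence (equivalently, a.e.\ convergence) of the Poisson-Jacobi integrals, which is why it is natural to place it immediately after that result. The one thing worth double-checking is that the series defining $U_r^{\ab}(f)$ is legitimately zero rather than merely having zero coefficients formally: this is guaranteed because the polynomial growth of $a_n^{\ab}(f)$ together with the estimate \eqref{bnd} ensures absolute convergence for $0 < r < 1$, so the identically vanishing coefficients genuinely produce the zero function.
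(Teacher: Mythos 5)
Your proof is correct and non-circular---Lemma \ref{lem:wsp} is stated after Theorem \ref{thm:conv}, and the proof of that theorem uses only the density of $S_{\ab}$ in $L^p$ together with the maximal bounds, never the lemma itself---but it takes a genuinely different route from the paper. The paper's proof is a two-line duality argument: $f \in L^p$ defines, via H\"older's inequality, a bounded linear functional $g \mapsto \int_0^{\pi} f(\theta)\,g(\theta)\,d\theta$ on $L^{p'}$; the hypothesis $a_n^{\ab}(f)=0$ for all $n$ says precisely that this functional annihilates every $\ph$, hence all of $S_{\ab}$, and since $S_{\ab}$ is dense in $L^{p'}$ by \cite[Lemma 2.3]{Stempak} (applicable because $p' < p(\ab)$, equivalently $p > p'(\ab)$), the functional vanishes identically, so $f \equiv 0$ a.e. Comparing the two: the paper's argument is elementary, needs no summability theory or maximal functions, and in fact uses only the one-sided restriction $p > p'(\ab)$ rather than the full membership $p \in E(\ab)$; your argument outsources all the work to Theorem \ref{thm:conv}, which rests on the weighted maximal estimates behind Proposition \ref{prop:max}, so it is heavier machinery, though the deduction from it is clean and legitimate given its placement in the paper. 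One inaccuracy in your commentary follows from this: the well-definedness of the coefficients is \emph{not} ``the only point where $p \in E(\ab)$ is used in an essential way'' in your proof, since the appeal to Theorem \ref{thm:conv}(b) or (d) requires the full two-sided hypothesis $p \in E(\ab)$, whereas well-definedness alone needs only $p > p'(\ab)$.
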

\begin{proof}
It is enough to observe that the lemma holds for $f \in S_{\ab}$, and then recall that such functions form
a dense subspace in the dual space $(L^p)^* = L^{p'}$.
\end{proof}

\begin{proof}[{Proof of Proposition \ref{prop:1'}}]
The $L^p$-boundedness in (a) and (b) is contained in \cite{Nowak&Roncal}, 
see \cite[Theorem 2.4]{Nowak&Roncal} together with comments on Bessel-Jacobi potentials in 
\cite[Section 1]{Nowak&Roncal}. It can also be obtained
with the aid of Lemma \ref{1'}. To show the remaining assertions we focus on $L_{\ab}^{-\sigma}$; the case
of $(\id + L_{\ab})^{-\sigma}$ is analogous.

As in the proof of \cite[Proposition 1]{betancor}, notice that for $f \in S_{\ab}$
\begin{equation} \label{id}
a_n^{\ab}\big(L_{\ab}^{-\sigma}f\big) = \big(\lambda_n^{\ab}\big)^{-\sigma}\, a_n^{\ab}(f), \qquad n \ge 0.
\end{equation}
Since, by H\"older's inequality and the $L^p$-boundedness of $L_{\ab}^{-\sigma}$, the functionals
$$
f \mapsto a_n^{\ab}\big(L_{\ab}^{-\sigma}f\big) \qquad \textrm{and} \qquad 
f \mapsto a_n^{\ab}(f)
$$
are bounded from $L^p$ to $\mathbb{C}$, and $S_{\ab}$ is dense in $L^p$, we infer that \eqref{id} holds
for $f \in L^p$. Now, if $L_{\ab}^{-\sigma}f \equiv 0$ for some $f \in L^p$, then $a_n^{\ab}(f)=0$ for
all $n \ge 0$ and hence Lemma \ref{lem:wsp} implies $f \equiv 0$. Therefore $L_{\ab}^{-\sigma}$ is one to
one on $L^p$.
\end{proof}

We finish this section by formulating an important consequence of Proposition \ref{prop:1'} and the fact
that $S_{\ab}$ coincides with its images under the action of the potential operators.
\begin{cor} \label{gestosc}
Let $\ab > -1$ and let $s>0$. Assume that $p \in E(\ab)$. Then $S_{\ab}$ is a dense subspace of
$\mathcal{L}_{\ab}^{p,s}$.
\end{cor}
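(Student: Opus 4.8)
The plan is to observe that, by the very definition of the norm on $\mathcal{L}_{\ab}^{p,s}$, the potential operator is an \emph{isometric} isomorphism from $L^p$ onto $\mathcal{L}_{\ab}^{p,s}$, and then simply to transport the known density of $S_{\ab}$ in $L^p$ through this isomorphism. I restrict attention to the case $\alpha+\beta \neq -1$; the case $\alpha+\beta = -1$ is identical with $L_{\ab}^{-s/2}$ replaced everywhere by $(\id+L_{\ab})^{-s/2}$.

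First I would record the two ingredients. Proposition \ref{prop:1'} guarantees that $L_{\ab}^{-s/2}$ is one to one on $L^p$, so each $f \in \mathcal{L}_{\ab}^{p,s}$ has a unique preimage $g \in L^p$ with $f = L_{\ab}^{-s/2}g$; the norm $\|f\|_{\mathcal{L}_{\ab}^{p,s}} = \|g\|_{L^p}$ is thereby well defined and makes $L_{\ab}^{-s/2}\colon L^p \to \mathcal{L}_{\ab}^{p,s}$ an isometric isomorphism. Next I would note that $S_{\ab}$ is invariant under the potential: since each $\ph$ is an eigenfunction, $L_{\ab}^{-s/2}\ph = (\lambda_n^{\ab})^{-s/2}\ph$, and because the multipliers $(\lambda_n^{\ab})^{-s/2}$ are all nonzero when $\alpha+\beta \neq -1$ (indeed $A_{\ab} > -1/2$ and $A_{\ab} \neq 0$ force $\lambda_n^{\ab} > 0$ for every $n$), the operator $L_{\ab}^{-s/2}$ acts diagonally with invertible entries on each finite-dimensional span, whence $L_{\ab}^{-s/2}(S_{\ab}) = S_{\ab}$. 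In particular $S_{\ab} = L_{\ab}^{-s/2}(S_{\ab}) \subseteq L_{\ab}^{-s/2}(L^p) = \mathcal{L}_{\ab}^{p,s}$, which confirms that $S_{\ab}$ really is a subspace of the potential space. (Here I use that $\ph \in L^p$ for $p < p(\ab)$, which holds since $p \in E(\ab)$.)

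To conclude, given $f \in \mathcal{L}_{\ab}^{p,s}$ I write $f = L_{\ab}^{-s/2}g$ with $g \in L^p$. By \cite[Lemma 2.3]{Stempak} (recall $p < p(\ab)$) there is a sequence $g_k \in S_{\ab}$ with $g_k \to g$ in $L^p$; setting $h_k := L_{\ab}^{-s/2}g_k$, the invariance above gives $h_k \in S_{\ab}$. Since $f - h_k = L_{\ab}^{-s/2}(g - g_k)$ and the norm is an isometry, $\|f - h_k\|_{\mathcal{L}_{\ab}^{p,s}} = \|g - g_k\|_{L^p} \to 0$, which yields the asserted density.

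There is no genuine obstacle here; the only step requiring a moment's care is the invariance $L_{\ab}^{-s/2}(S_{\ab}) = S_{\ab}$, which is however immediate from the spectral definition of the potential together with the nonvanishing of the spectral multipliers --- automatic in the Bessel case and valid in the Riesz case precisely because $\alpha+\beta \neq -1$.
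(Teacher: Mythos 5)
Your proof is correct and follows exactly the route the paper indicates: it derives the corollary from Proposition \ref{prop:1'} together with the observation that $S_{\ab}$ coincides with its image under the potential operator, transporting the density of $S_{\ab}$ in $L^p$ through the resulting isometric isomorphism. The paper leaves these details implicit, while you have written them out (including the verification that $\lambda_n^{\ab}>0$ for all $n$ when $\alpha+\beta\neq -1$), so nothing is missing.
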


\section{Sobolev spaces defined by variable index derivatives} \label{sec:bet}

The aim of this section is to prove Theorem \ref{thm:sob1}. Thus we let $\mathbb{D}^{(k)}=D^{(k)}$
be the higher order `derivatives' defined by means of the first order `derivatives' related to variable
parameters of type. In what follows we shall generalize the line of reasoning from \cite[Section 3]{betancor}
elaborated in the ultraspherical case. 

To begin with, we look at the action of $D^{(k)}$ and its formal adjoint in $L^2$ on the Jacobi functions.

\begin{lem} \label{12'}
Let $\ab > -1$. Then for any $k,n \ge 0$
\begin{align*}
D^{(k)}\phi_{n}^{\ab} & = (-1)^k \sqrt{(n-k+1)_k \,(n+\alpha+\beta+1)_k} \; \phi_{n-k}^{\alpha+k,\beta+k}, \\
\big(D^{(k)}\big)^* \phi_n^{\alpha+k,\beta+k} & = (-1)^k \sqrt{(n+1)_k \, (n+k+\alpha+\beta+1)_k} \; \phi_{n+k}^{\ab},
\end{align*}
where $(z)_k$ is the Pochhammer symbol, $(z)_k = z(z+1)\ldots (z+k-1)$ when $k\neq 0$ and $(z)_0 = 1$.
\end{lem}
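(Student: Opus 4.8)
The plan is to reduce everything to the first-order case $k=1$ and then iterate. The crucial observation is that the multiplicative part of $D_{\ab}$ is precisely $-(\log\Psi^{\ab})'$: a direct computation of $(\Psi^{\ab})'/\Psi^{\ab}$ using $\frac{d}{d\theta}\log\sin\frac{\theta}2 = \frac12\cot\frac{\theta}2$ and $\frac{d}{d\theta}\log\cos\frac{\theta}2 = -\frac12\tan\frac{\theta}2$ shows that
$$
D_{\ab} = \frac{d}{d\theta} - \frac{(\Psi^{\ab})'}{\Psi^{\ab}},
$$
so that $D_{\ab}$ conjugates to plain differentiation, $D_{\ab}\big(\Psi^{\ab}g\big) = \Psi^{\ab}\,g'$ for $g \in C^1(0,\pi)$. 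Applying this with $g = \mathcal{P}_n^{\ab}$, invoking the classical differentiation formula $\frac{d}{dx}P_n^{\ab}(x) = \frac{n+\alpha+\beta+1}{2}P_{n-1}^{\alpha+1,\beta+1}(x)$, and using $\sin\theta = 2\sin\frac{\theta}2\cos\frac{\theta}2$ to absorb the Jacobian, I would conclude that $D_{\ab}\phi_n^{\ab}$ is a constant multiple of $\phi_{n-1}^{\alpha+1,\beta+1}$, the constant being negative since the normalizing constants $c_n^{\ab}$ are positive.

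To fix the magnitude of that constant without Gamma-function bookkeeping, I would exploit the factorization $D_{\ab}^* D_{\ab} = L_{\ab} - A_{\ab}^2$ recorded in the introduction. Since $L_{\ab}\phi_n^{\ab} = (n+A_{\ab})^2\phi_n^{\ab}$ and $(n+A_{\ab})^2 - A_{\ab}^2 = n(n+\alpha+\beta+1)$, pairing $D_{\ab}^* D_{\ab}\phi_n^{\ab}$ with $\phi_n^{\ab}$ gives $\|D_{\ab}\phi_n^{\ab}\|_2^2 = n(n+\alpha+\beta+1)$; as $\phi_{n-1}^{\alpha+1,\beta+1}$ is a unit vector, this pins down
$$
D_{\ab}\phi_n^{\ab} = -\sqrt{n(n+\alpha+\beta+1)}\;\phi_{n-1}^{\alpha+1,\beta+1},
$$
which is the asserted formula for $k=1$.

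The general formula then follows by induction on $k$, writing $D^{(k)} = D_{\alpha+k-1,\beta+k-1}\circ D^{(k-1)}$. Applying the inductive hypothesis and then the first-order identity with parameters $(\alpha+k-1,\beta+k-1)$ to $\phi_{n-k+1}^{\alpha+k-1,\beta+k-1}$ (where the shifted index and type combine to give $\sqrt{(n-k+1)(n+k+\alpha+\beta)}$), the only real work is the bookkeeping of Pochhammer symbols. One checks the telescopings $(n-k+1)\,(n-k+2)_{k-1} = (n-k+1)_k$ and $(n+k+\alpha+\beta)\,(n+\alpha+\beta+1)_{k-1} = (n+\alpha+\beta+1)_k$, while the two signs $(-1)^{k-1}$ and $(-1)$ combine to $(-1)^k$.

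For the adjoint I would argue by duality rather than repeat the induction. Since $D^{(k)}$ sends each $\phi_n^{\ab}$ to a multiple of $\phi_{n-k}^{\alpha+k,\beta+k}$, and both systems are orthonormal bases of $L^2(0,\pi)$, I compute, for $m\ge 0$,
$$
\big\langle (D^{(k)})^*\phi_m^{\alpha+k,\beta+k},\, \phi_n^{\ab}\big\rangle = \big\langle \phi_m^{\alpha+k,\beta+k},\, D^{(k)}\phi_n^{\ab}\big\rangle,
$$
which by the first formula and orthonormality vanishes unless $n = m+k$, in which case it equals $(-1)^k\sqrt{(m+1)_k\,(m+k+\alpha+\beta+1)_k}$. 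Expanding $(D^{(k)})^*\phi_m^{\alpha+k,\beta+k}$ in the basis $\{\phi_n^{\ab}\}$ then yields the second identity. The only genuinely computational point in the whole argument is the first-order constant, and the eigenvalue trick above disposes of it; everything else is formal manipulation.
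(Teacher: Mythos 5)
Your argument is correct, and its skeleton (establish the first-order identities, then iterate) is the same as the paper's, but two of your ingredients genuinely differ from the paper's proof. For the first-order formula $D_{\ab}\phi_n^{\ab}=-\sqrt{n(n+\alpha+\beta+1)}\,\phi_{n-1}^{\alpha+1,\beta+1}$ the paper simply quotes Szeg\"o's formula (4.21.7) in \cite{Sz} and iterates it; you rederive it from the conjugation $D_{\ab}=\Psi^{\ab}\circ\frac{d}{d\theta}\circ(\Psi^{\ab})^{-1}$ together with the classical derivative formula for Jacobi polynomials, and you pin down the constant through $\|D_{\ab}\phi_n^{\ab}\|_2^2=\langle D_{\ab}^*D_{\ab}\phi_n^{\ab},\phi_n^{\ab}\rangle=\lambda_n^{\ab}-A_{\ab}^2=n(n+\alpha+\beta+1)$, thereby avoiding any computation with the normalizing constants $c_n^{\ab}$. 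For the adjoint identity the paper never leaves the pointwise level: combining \eqref{pochodna} with the factorization $D_{\ab}^*D_{\ab}=L_{\ab}-A_{\ab}^2$, it reads off $D_{\ab}^*\phi_{n-1}^{\alpha+1,\beta+1}=-\sqrt{n(n+\alpha+\beta+1)}\,\phi_n^{\ab}$ (this is \eqref{15'}) and then iterates, exactly as for the first identity; no inner product is ever moved across an operator. You instead obtain the adjoint identity by $L^2$ duality and expansion in the orthonormal basis $\{\phi_n^{\ab}\}$.

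The price of your route, which you should acknowledge, is that both of your non-formal steps use $D_{\ab}^*$ and $(D^{(k)})^*$ as genuine $L^2$ adjoints on the functions at hand, not merely formal ones. Concretely: (i) the identities $\langle D_{\ab}^*D_{\ab}\phi_n^{\ab},\phi_n^{\ab}\rangle=\|D_{\ab}\phi_n^{\ab}\|_2^2$ and $\langle (D^{(k)})^*\phi_m^{\alpha+k,\beta+k},\phi_n^{\ab}\rangle=\langle\phi_m^{\alpha+k,\beta+k},D^{(k)}\phi_n^{\ab}\rangle$ require integrations by parts whose boundary terms must be shown to vanish. They do vanish, since the products arising decay at least like $\theta^{2\alpha+2}$ near $0$ and $(\pi-\theta)^{2\beta+2}$ near $\pi$ and $\alpha,\beta>-1$; but when $\alpha<-1/2$ or $\beta<-1/2$ the individual factors blow up at the endpoints, so this check cannot be waved away. (ii) To expand $(D^{(k)})^*\phi_m^{\alpha+k,\beta+k}$ in the basis $\{\phi_n^{\ab}\}$ you must know in advance that this function lies in $L^2(0,\pi)$; this also holds (it is $O(\Psi^{\ab})$ at the endpoints, as can be read off from the factorization \eqref{fct}), but it is not automatic, and if one verifies it by iterating the first-order adjoint formula, the duality argument becomes superfluous. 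These are routine repairs rather than fatal flaws; what the paper's purely algebraic iteration buys is precisely that no such analytic justification is needed, while what your version buys is independence from Szeg\"o's normalization and one induction instead of two.
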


\begin{proof}
To get the first identity it is enough to iterate the formula (see \cite[(4.21.7)]{Sz})
\begin{equation} \label{pochodna}
D_{\ab}\phi_{n}^{\ab}=-\sqrt{n(n+\alpha+\beta+1)}\,\phi_{n-1}^{\alpha+1,\beta+1}.
\end{equation}
To prove the second identity, observe that by \eqref{pochodna} and the relation 
\begin{equation*}
D_{\ab}^*D_{\ab}\,\ph=\big(L_{\ab}-A_{\ab}\big)\,\ph=\big(\lambda_n^{\ab}-\lambda_0^{\ab}\big)\,\ph
\end{equation*}
we have
\begin{equation} \label{15'}
D_{\ab}^*\,\phi_{n-1}^{\alpha+1,\beta+1}=-\sqrt{n(n+\alpha+\beta+1)}\,\phi_n^{\ab},\qquad n\ge1.
\end{equation}
Applying this repeatedly we get the desired conclusion.
\end{proof}

For $k \ge 0$ and $1 \le j \le k$, denote by 
$D^{(k,j)}$ the operator emerging from $D^{(k)}$ by replacing $k$ by $j$, and then $\alpha$
by $\alpha+k-j$ and $\beta$ by $\beta+k-j$, i.e.\
$$
D^{(k,j)} := D_{\alpha+k-1,\beta+k-1} \circ D_{\alpha+k-2,\beta+k-2}\circ \ldots \circ 
	D_{\alpha+k-j,\beta+k-j}. 
$$
Then by the second identity of Lemma \ref{12'} it follows that
\begin{equation} \label{16'}
\big(D^{(k,j)}\big)^* \phi_n^{\alpha+k,\beta+k} =(-1)^j\sqrt{(n+1)_j\, (n+2k-j+\alpha+\beta+1)_j} 
	\;\phi_{n+j}^{\alpha+k-j,\beta+k-j}.
\end{equation}

Next, we state some factorization identities for $D^{(k)}$ and its adjoint. It is elementary to check that
\begin{align*}
D_{\ab}f(\theta)& =\Psi^{\ab}(\theta)\frac{d}{d\theta}\Big(\frac{1}{\Psi^{\ab}(\theta)}f(\theta)\Big), \\
D_{\ab}^*f(\theta)& =-\frac{1}{\Psi^{\ab}(\theta)}\frac{d}{d\theta}\Big(\Psi^{\ab}(\theta)f(\theta)\Big).
\end{align*}
Then with a bit more effort we see that
\begin{align}
D^{(k)}f(\theta) & =\Psi^{\ab}(\theta)(\sin \theta)^k \bigg(\frac{1}{\sin \theta}
 \frac{d}{d\theta}\bigg)^{k}\Big(\frac{1}{\Psi^{\ab}(\theta)}f(\theta)\Big),\nonumber\\
\big(D^{(k)}\big)^*f(\theta)&=\frac{(-1)^{k}}{\Psi^{\ab}(\theta)}\sin \theta \bigg(\frac{1}{\sin \theta}
 \frac{d}{d\theta}\bigg)^{k}\Big((\sin\theta)^{k-1}\Psi^{\ab}(\theta)f(\theta)\Big). \label{fct}
\end{align}
Notice that the last identity implies 
\begin{equation} \label{post*}
\big(D^{(k,j)}\big)^*f(\theta)
=\frac{(-1)^j}{\Psi^{\ab}(\theta)}(\sin\theta)^{-k+j+1} \bigg(\frac{1}{\sin \theta}   
  \frac{d}{d\theta}\bigg)^{j}\Big((\sin\theta)^{k-1}\Psi^{\ab}(\theta)f(\theta)\Big).
\end{equation} 

Our next objective is to demonstrate that $S_{\ab}$ is a dense subspace of the Sobolev spaces.
\begin{prop}\label{p2'}
Let $\alpha,\beta>-1$ and let $m \ge 1$. Assume that $p \in E(\ab)$. 
Then $S_{\ab}$ is a dense subspace of $W_{\ab}^{p,m}$.
\end{prop}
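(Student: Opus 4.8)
The plan is to approximate an arbitrary $f \in W_{\ab}^{p,m}$ in the Sobolev norm by finite linear combinations of Jacobi functions, using the Poisson--Jacobi integrals $U_r^{\ab}$ as an approximate identity. First I would record that $S_{\ab}\subset W_{\ab}^{p,m}$, so that the statement makes sense: by the first identity of Lemma \ref{12'}, $D^{(k)}\phi_n^{\ab}$ is a scalar multiple of $\phi_{n-k}^{\alpha+k,\beta+k}$, and since $(\alpha+k,\beta+k)\ge -1/2$ for every $k\ge 1$ (because $\alpha,\beta>-1$), these functions lie in all $L^p$. The same observation shows that for $k\ge 1$ one has $\alpha+k,\beta+k>0$, hence $p\in E(\ab)$ forces $p\in E(\alpha+k,\beta+k)=(1,\infty)$; this will let me invoke Theorem \ref{thm:conv} in the shifted settings.

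The key identity to establish is the commutation relation
$$
D^{(k)}U_r^{\ab}f = r^k\, U_r^{\alpha+k,\beta+k}\big(D^{(k)}f\big), \qquad 0<r<1,\quad 0\le k\le m.
$$
Testing the weak derivative $D^{(k)}f$ against $\phi_m^{\alpha+k,\beta+k}$ and using the second identity of Lemma \ref{12'} for $\big(D^{(k)}\big)^*$ yields the coefficient relation $a_m^{\alpha+k,\beta+k}\big(D^{(k)}f\big)=(-1)^k\sqrt{(m+1)_k\,(m+k+\alpha+\beta+1)_k}\;a_{m+k}^{\ab}(f)$. Differentiating the rapidly convergent series defining $U_r^{\ab}f$ term by term, which is legitimate because the coefficients decay geometrically in $r^n$ while $\phi_n^{\ab}$ and their derivatives grow only polynomially by \eqref{bnd}, and applying the first identity of Lemma \ref{12'}, a reindexing $n=m+k$ matches the two sides.

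With the commutation relation in hand I would show $U_r^{\ab}f\to f$ in $W_{\ab}^{p,m}$ as $r\to 1^-$. For each $k\le m$,
$$
\big\|D^{(k)}U_r^{\ab}f - D^{(k)}f\big\|_p \le r^k\big\|U_r^{\alpha+k,\beta+k}\big(D^{(k)}f\big) - D^{(k)}f\big\|_p + \big(1-r^k\big)\big\|D^{(k)}f\big\|_p,
$$
and both terms tend to $0$: the first by Theorem \ref{thm:conv}(d) applied with parameters $\alpha+k,\beta+k$ (here one uses $D^{(k)}f\in L^p$, granted by $f\in W_{\ab}^{p,m}$), the second since $r^k\to 1$. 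Summing over $k$ gives convergence in the Sobolev norm. Finally, for fixed $r$ I would truncate $U_r^{\ab}f$ to its partial sums $\sum_{n=0}^{N} r^n a_n^{\ab}(f)\,\phi_n^{\ab}\in S_{\ab}$; applying $D^{(k)}$ termwise and estimating the tail by means of \eqref{bnd}, the polynomial growth of $a_n^{\ab}(f)$, and the geometric factor $r^n$, these partial sums converge to $U_r^{\ab}f$ in $W_{\ab}^{p,m}$. A routine two-$\epsilon$ argument then completes the proof.

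The main obstacle is the rigorous justification of the commutation relation for a general $f$ in the Sobolev space rather than for $f\in S_{\ab}$: one must verify that the weak derivative $D^{(k)}f$ has exactly the Fourier--Jacobi coefficients predicted by Lemma \ref{12'}, which hinges on admitting $\phi_m^{\alpha+k,\beta+k}$ as a test function in the definition of the weak derivative, and that termwise differentiation of $U_r^{\ab}f$ is valid. Once these points are secured, the remaining estimates are straightforward consequences of Theorem \ref{thm:conv} and \eqref{bnd}.
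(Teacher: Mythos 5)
Your overall strategy is essentially the paper's own: prove that $D^{(k)}$ commutes (up to a parameter shift) with the Poisson--Jacobi integral, deduce from Theorem \ref{thm:conv}\,(d) that the Poisson--Jacobi integrals of $f$ converge to $f$ in the $W_{\ab}^{p,m}$-norm, and then truncate the series to land in $S_{\ab}$. Your one structural deviation is to use $U_r^{\ab}$ instead of the spectral variant $\widetilde{U}_r^{\ab}$; this costs you the factor $r^k$ in the commutation relation, which you handle correctly, whereas the paper gets the exact commutation $D^{(k)}\widetilde{U}_r^{\ab}(f)=\widetilde{U}_r^{\alpha+k,\beta+k}\big(D^{(k)}f\big)$ of \eqref{19'} because $A_{\alpha+k,\beta+k}=A_{\ab}+k$. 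That difference is cosmetic, and your observations that $S_{\ab}\subset W_{\ab}^{p,m}$ and that $p\in E(\ab)$ implies $p\in E(\alpha+k,\beta+k)=(1,\infty)$ for $k\ge 1$ are correct.

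The genuine problem is the step you yourself call ``the main obstacle'' and then leave unresolved: the coefficient identity
$$
a_n^{\alpha+k,\beta+k}\big(D^{(k)}f\big)=(-1)^k\sqrt{(n+1)_k\,(n+k+\alpha+\beta+1)_k}\;a_{n+k}^{\ab}(f)
$$
for a general $f\in W_{\ab}^{p,m}$. ``Testing the weak derivative against $\phi_n^{\alpha+k,\beta+k}$'' is not available: by definition, $D^{(k)}f$ is only known to satisfy $\int_0^\pi D^{(k)}f\,\varphi\,d\theta = \int_0^\pi f\,\big(D^{(k)}\big)^*\varphi\,d\theta$ for $\varphi \in C_c^\infty(0,\pi)$, and $\phi_n^{\alpha+k,\beta+k}$ is not compactly supported. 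Since the commutation relation, and hence the entire approximation scheme, rests on this identity, flagging the difficulty without closing it leaves the proof incomplete. The paper devotes Lemma \ref{lem:32} to precisely this point, and it is where the real work lies: one tests against $\gamma_l\,\phi_n^{\alpha+k,\beta+k}$, where the $\gamma_l$ are cutoffs with $\support\gamma_l\subset(\frac{1}{2l},\pi-\frac{1}{2l})$, $\gamma_l\equiv 1$ on $(\frac{1}{l},\pi-\frac{1}{l})$, and $\big|\frac{d^r}{d\theta^r}\gamma_l\big|\le C_r(\sin\theta)^{-r}$; expands $\big(D^{(k)}\big)^*\big[\gamma_l\,\phi_n^{\alpha+k,\beta+k}\big]$ via the factorization \eqref{fct} and the Leibniz rule; identifies the resulting terms through \eqref{post*} and \eqref{16'} as $\big(D^{(k,j)}\big)^*$ applied to Jacobi functions; and shows that the discrepancy from $\gamma_l\,\big(D^{(k)}\big)^*\phi_n^{\alpha+k,\beta+k}$ is bounded by $C\,\Psi^{\ab}$ uniformly in $l$. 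Only then does dominated convergence apply, with majorant $c\,\Psi^{\ab}|f|$, which is integrable by H\"older's inequality precisely because $\Psi^{\ab}\in L^{p'}$, i.e.\ because $p>p'(\ab)$ --- one of the two halves of the hypothesis $p\in E(\ab)$. Without this (or an equivalent) argument, your proposal assumes the very lemma that carries the weight of the proposition.
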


To prove this we will need the following auxiliary technical result.
\begin{lem} \label{lem:32}
Let $\ab, m$ and $p$ be as in Proposition \ref{p2'}. Then for each 
$f\in W_{\ab}^{p,m}$ 
\begin{equation*} 
a_n^{\alpha+k,\beta+k}\big(D^{(k)}f\big)=\int_0^{\pi}f(\theta)\big(D^{(k)}\big)^*\phi_n^{\alpha+k,\beta+k}(\theta)\,d\theta, \qquad 0 \le k \le m, \quad n \ge 0.
\end{equation*}
\end{lem}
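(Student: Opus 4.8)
The plan is to read the claimed identity as an integration-by-parts (adjointness) statement: unwinding the definition of the Fourier--Jacobi coefficient, the left-hand side equals $\int_0^{\pi} D^{(k)}f(\theta)\,\phi_n^{\alpha+k,\beta+k}(\theta)\,d\theta$, so the task is to transfer the operator $D^{(k)}$ onto the second factor, producing $\big(D^{(k)}\big)^*\phi_n^{\alpha+k,\beta+k}$. The difficulty is that $D^{(k)}f$ is only a weak derivative, whose defining relation $\int D^{(k)}f\,\psi=\int f\,\big(D^{(k)}\big)^*\psi$ pairs $f$ against test functions $\psi\in C_c^\infty(0,\pi)$, whereas $\phi_n^{\alpha+k,\beta+k}$ is smooth on $(0,\pi)$ but not compactly supported. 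I would therefore argue by induction on $k$, reducing matters at each step to a single first-order operator, and truncate $\phi_n^{\alpha+k,\beta+k}$ near the endpoints. The case $k=0$ is trivial.

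For the inductive step, first note that, testing the weak-derivative relations of orders $k$ and $k-1$ against $\psi\in C_c^\infty(0,\pi)$ and using that $D_{\alpha+k-1,\beta+k-1}^*$ has coefficients smooth on $(0,\pi)$, the function $D^{(k)}f$ is the weak $D_{\alpha+k-1,\beta+k-1}$-derivative of $D^{(k-1)}f$. Fixing a smooth cutoff $\eta_\epsilon$ equal to $1$ on $[\epsilon,\pi-\epsilon]$ and supported in $(0,\pi)$, the function $\eta_\epsilon\,\phi_n^{\alpha+k,\beta+k}$ lies in $C_c^\infty(0,\pi)$, so the weak relation applies and, by the factorization $D_{\gamma,\delta}^*F=-\frac{1}{\Psi^{\gamma,\delta}}\frac{d}{d\theta}\big(\Psi^{\gamma,\delta}F\big)$,
$$
\int_0^{\pi} D^{(k)}f\cdot\eta_\epsilon\,\phi_n^{\alpha+k,\beta+k}\,d\theta
= \int_0^{\pi} D^{(k-1)}f\,\Big(\eta_\epsilon\,D_{\alpha+k-1,\beta+k-1}^*\phi_n^{\alpha+k,\beta+k}-\eta_\epsilon'\,\phi_n^{\alpha+k,\beta+k}\Big)\,d\theta.
$$
As $\epsilon\to0^+$ the left side tends to $a_n^{\alpha+k,\beta+k}\big(D^{(k)}f\big)$, and by \eqref{15'} the main term on the right is a constant multiple of $\int_0^{\pi} D^{(k-1)}f\cdot\phi_{n+1}^{\alpha+k-1,\beta+k-1}$, to which I apply the inductive hypothesis; the constants match automatically because $\big(D^{(k)}\big)^*=\big(D^{(k-1)}\big)^*\circ D_{\alpha+k-1,\beta+k-1}^*$. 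All the dominated-convergence passages here are legitimate precisely because $\phi_n^{\alpha+k,\beta+k}$, $\phi_{n+1}^{\alpha+k-1,\beta+k-1}$ and the images under $\big(D^{(k-1)}\big)^*$ lie in $L^{p'}$ whenever $p\in E(\ab)$, so they pair integrably with the $L^p$ functions $D^{(k-1)}f$.

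The heart of the matter, and the step I expect to be the main obstacle, is showing that the error term $\int_0^{\pi} D^{(k-1)}f\cdot\eta_\epsilon'\,\phi_n^{\alpha+k,\beta+k}\,d\theta$ vanishes as $\epsilon\to0^+$. Since $\eta_\epsilon'$ is supported in the transition regions $[\epsilon,2\epsilon]\cup[\pi-2\epsilon,\pi-\epsilon]$ with $|\eta_\epsilon'|\le C\epsilon^{-1}$, Hölder's inequality bounds the contribution near $0$ by $\epsilon^{-1}\|D^{(k-1)}f\|_{L^p(\epsilon,2\epsilon)}\,\|\phi_n^{\alpha+k,\beta+k}\|_{L^{p'}(\epsilon,2\epsilon)}$. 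Using $\phi_n^{\alpha+k,\beta+k}\sim\Psi^{\alpha+k,\beta+k}\sim\theta^{\alpha+k+1/2}$ near $0$, the last factor is of order $\epsilon^{\alpha+k+1/2+1/p'}$, so the whole contribution is controlled by $\|D^{(k-1)}f\|_{L^p(\epsilon,2\epsilon)}\,\epsilon^{\alpha+k-1/2+1/p'}$, and the exponent is positive already in the worst case $k=1$ exactly because $p>p'(\ab)$ (the analogous estimate near $\pi$ uses $\beta$ in place of $\alpha$). Combined with the absolute continuity of the integral, which yields $\|D^{(k-1)}f\|_{L^p(\epsilon,2\epsilon)}\to0$, this forces the error term to $0$. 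This is precisely where the restriction $p\in E(\ab)$ is indispensable; everything else reduces to matching of constants and routine dominated convergence.
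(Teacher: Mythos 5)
Your proof is correct, but it takes a genuinely different route from the paper's. The paper does not induct on $k$: it fixes cutoffs $\gamma_l$ with derivative bounds $|\frac{d^r}{d\theta^r}\gamma_l|\le C_r(\sin\theta)^{-r}$ uniform in $l$, applies the weak-derivative relation for the \emph{full} $k$-th order operator to the test functions $\gamma_l\,\phi_n^{\alpha+k,\beta+k}$, expands $\big(D^{(k)}\big)^*\big[\gamma_l\,\phi_n^{\alpha+k,\beta+k}\big]$ by the Leibniz rule via the factorizations \eqref{fct} and \eqref{post*}, and then uses \eqref{16'} to see that all the commutator terms (those involving derivatives of $\gamma_l$) are dominated, uniformly in $l$, by a constant multiple of $\Psi^{\ab}\in L^{p'}$; two applications of dominated convergence finish the proof, with no rate of decay required. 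You instead proceed by induction, first checking (correctly, and this step is genuinely needed) that $D^{(k)}f$ is the weak $D_{\alpha+k-1,\beta+k-1}$-derivative of $D^{(k-1)}f$, which lets you integrate by parts one order at a time; the single error term $\int D^{(k-1)}f\,\eta_\epsilon'\,\phi_n^{\alpha+k,\beta+k}$ is then killed by H\"older and power counting at the endpoints, and your verification that the exponent $\alpha+k-1/2+1/p'$ (resp.\ with $\beta$ near $\pi$) is positive exactly when $p>p'(\ab)$ consumes the hypothesis $p\in E(\ab)$ at the same place where the paper needs $\Psi^{\ab}\in L^{p'}$. What your approach buys: only the first-order identity \eqref{15'} and ordinary cutoffs are needed, the constants match automatically from $\big(D^{(k)}\big)^*=\big(D^{(k-1)}\big)^*\circ D_{\alpha+k-1,\beta+k-1}^*$, and the errors vanish with an explicit rate. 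What the paper's approach buys: no induction and no quantitative endpoint analysis, at the price of the special cutoffs and the full strength of \eqref{16'} to dominate every Leibniz term at once. One minor slip worth fixing: your cutoff description is internally inconsistent --- if $\eta_\epsilon=1$ on $[\epsilon,\pi-\epsilon]$, then $\eta_\epsilon'$ cannot be supported in $[\epsilon,2\epsilon]\cup[\pi-2\epsilon,\pi-\epsilon]$; take $\eta_\epsilon=1$ on $[2\epsilon,\pi-2\epsilon]$ and supported in $[\epsilon,\pi-\epsilon]$ instead. This does not affect the argument.
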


\begin{proof}
For $k=0$ there is nothing to prove, so assume that $k\ge1$. 
Choose a sequence of smooth and compactly supported functions $\{\gamma_{l} : l \ge 1\}$ on $(0,\pi)$
satisfying (see the proof of \cite[Proposition 2]{betancor})
\begin{enumerate}
\item[(i)] $\support \gamma_{l}\subset(\frac{1}{2l},\pi-\frac{1}{2l})$, $\gamma_{l}(\theta)=1$ for $\theta\in(\frac{1}{l},\pi-\frac{1}{l})$, $0\le\gamma_l(\theta)\le1$ for $\theta \in (0,\pi)$,
\item[(ii)] for each $r \ge 0$ there exists $C_r>0$ such that
\begin{equation*}
\Big|\frac{d^r}{d\theta^r}\gamma_l(\theta)\Big|\le C_r (\sin\theta)^{-r}, 
	\qquad \theta\in(0,\pi), \quad l \ge 1.
\end{equation*}
\end{enumerate}
By assumption $D^{(k)}f\in L^p$ and so, by H\"older's inequality, the product 
$D^{(k)}f \, \phi_n^{\alpha+k,\beta+k}$ is integrable over $(0,\pi)$. Since $\gamma_{l} \to 1$ pointwise
as $l \to \infty$, the dominated convergence theorem leads to 
\begin{align}
a_n^{\alpha+k,\beta+k}\big(D^{(k)}f\big)&
=\lim\limits_{l \rightarrow
\infty}\int_{0}^{\pi}D^{(k)}f(\theta)\,\gamma_l(\theta)\,\phi_n^{\alpha+k,\beta+k}(\theta)\,d\theta
 \nonumber\\
&=\lim\limits_{l \rightarrow\infty}\int_{0}^{\pi}f(\theta)\,\big(D^{(k)}\big)^*\big[\gamma_l(\theta)\phi_n^{\alpha+k,\beta+k}(\theta)\big]\,d\theta. \label{ocro}
\end{align}

We now analyze the last integral. 
An application of \eqref{fct} and the Leibniz rule yield
\begin{align*}
&\big(D^{(k)}\big)^*\big[\gamma_l(\theta)\phi_n^{\alpha+k,\beta+k}(\theta)\big]\\
&=\frac{(-1)^{k}}{\Psi^{\ab}(\theta)}\sin\theta \bigg(\frac{1}{\sin\theta}\frac{d}{d\theta}\bigg)^{k}\Big((\sin\theta)^{k-1}\Psi^{\ab}(\theta)\gamma_l(\theta)\phi_n^{\alpha+k,\beta+k} (\theta)\Big) \\
&=\frac{(-1)^{k}}{\Psi^{\ab}(\theta)}\sin\theta\sum_{j=0}^{k}\binom{k}{j}\bigg(\frac{1}{\sin\theta}\frac{d}{d\theta}\bigg)^{j}\Big((\sin\theta)^{k-1}\Psi^{\ab}(\theta)\phi_n^{\alpha+k,\beta+k}
(\theta)\Big)\bigg(\frac{1}{\sin \theta}\frac{d}{d\theta}\bigg)^{k-j}\gamma_l(\theta).
\end{align*}
This combined with \eqref{post*} gives
\begin{align*}
& \big(D^{(k)}\big)^*\big[\gamma_l(\theta)\phi_n^{\alpha+k,\beta+k}(\theta)\big]\\
& =\sum_{j=0}^{k}\binom{k}{j}(-1)^{k-j}(\sin\theta)^{k-j} \big(D^{(k,j)}\big)^*
	\phi_n^{\alpha+k,\beta+k}(\theta)
\,\bigg(\frac{1}{\sin \theta}\frac{d}{d\theta}\bigg)^{k-j}\gamma_l(\theta).
\end{align*}
Furthermore, by a straightforward analysis and (ii) we have
\begin{align*}
\bigg|\bigg(\frac{1}{\sin \theta}\frac{d}{d\theta}\bigg)^{r}\gamma_l(\theta)\bigg|\le C_r\sum_{i=1}^r\bigg|\frac{1}{(\sin\theta)^{2r-i}}\frac{d^i}{d\theta^i}\gamma_l(\theta)\bigg|\le C_r\frac{1}{(\sin\theta)^{2r}},\qquad \theta\in(0,\pi),\quad l \ge 1,
\end{align*}
where $r \ge 1$. We conclude that 
\begin{align*}
& \Big|\big(D^{(k)}\big)^*\big[\gamma_l(\theta)\phi_n^{\alpha+k,\beta+k}(\theta)\big]
	-\gamma_l(\theta)\big(D^{(k)}\big)^* \phi_n^{\alpha+k,\beta+k}(\theta)\Big| \\
& \le C_k \bigg| \sum_{j=0}^{k-1} \frac{1}{(\sin\theta)^{k-j}} \big( D^{(k,j)}\big)^* 
	\phi_n^{\alpha+k,\beta+k}(\theta)\bigg|, \qquad \theta \in (0,\pi).
\end{align*}
In view of \eqref{16'}, 
the right-hand side here is controlled by a constant multiple of $\Psi^{\ab}(\theta)$, 
uniformly in $l \ge 1$, and $\Psi^{\ab} \in L^{p'}$ since $p \in E(\ab)$. On the other hand, 
the left-hand side tends to $0$ pointwise, by the choice of $\gamma_{l}$. Thus the dominated convergence
theorem implies
$$
\gr \int_0^{\pi}f(\theta)\big(D^{(k)}\big)^*\big[\gamma_l(\theta)\phi_n^{\alpha+k,\beta+k}(\theta)\big]\,d\theta
=\gr \int_0^{\pi}f(\theta)\,\gamma_l(\theta)\,\big(D^{(k)}\big)^*\phi_n^{\alpha+k,\beta+k}(\theta)\,d\theta,
$$
the integrable majorant being $c\Psi^{\ab}f$. 
Taking into account \eqref{ocro}, this together with another application of the dominated convergence
theorem finishes the proof.
\end{proof}

\begin{proof}[{Proof of Proposition \ref{p2'}}]
We will demonstrate that any function from $W_{\ab}^{p,m}$ can be approximated in the $W_{\ab}^{p,m}$-norm  by partial sums of its Poisson-Jacobi integral. 
The latter functions belong to $S_{\ab}$, which is by Lemma \ref{12'} a subspace of $W^{p,m}_{\ab}$.
For this purpose we need to reveal an interaction between $D^{(k)}$ and $\widetilde{U}_r^{\ab}$.
It turns out that these operators, roughly speaking, almost commute, see \eqref{19'} below. 
This observation is crucial. Then the proof proceeds with the aid of Theorem \ref{thm:conv} (d).

Let $f \in W_{\ab}^{p,m}$ be fixed and let $0 \le k \le m$.
Combining Lemma \ref{lem:32} with the second identity of Lemma \ref{12'} we see that
$$
a_n^{\alpha+k,\beta+k}\big(D^{(k)}f\big)=(-1)^k\sqrt{(n+1)_k\,(n+k+\alpha+\beta+1)_k}\;a_{n+k}^{\ab}(f).
$$
Using this and the first identity of Lemma \ref{12'} we can write
\begin{align} \nonumber
D^{(k)}\widetilde{U}_r^{\ab}(f)(\theta)&=
\sum_{n=k}^{\infty}r^{|n+A_{\ab}|}\,a_n^{\ab}(f)\,(-1)^{k}\,\sqrt{(n-k+1)_k\,(n+\alpha+\beta+1)_k}\,\phi_{n-k}^{\alpha+k,\beta+k}(\theta)\\ \nonumber
&=\sum_{n=k}^{\infty}r^{|n+A_{\ab}|}\,a_{n-k}^{\alpha+k,\beta+k}\big(D^{(k)}f\big)\,\phi_{n-k}^{\alpha+k,\beta+k}(\theta)\\ \nonumber
&=\sum_{n=0}^{\infty}r^{|n+A_{\alpha+k,\beta+k}|}\,a_{n}^{\alpha+k,\beta+k}\big(D^{(k)}f\big)\,\phi_{n}^{\alpha+k,\beta+k}(\theta) \\
& = \widetilde{U}_{r}^{\alpha+k, \beta+k}\big(D^{(k)}f\big)(\theta), \label{19'}
\end{align}
where $0 \le r < 1$ and $\theta \in (0,\pi)$. Exchanging the order of $D^{(k)}$ and the summation in the 
first equality of the above chain is indeed legitimate, as easily verified with the aid of \eqref{bnd}.

Analogous arguments apply to tails of the Poisson-Jacobi integral,
$$
\widetilde{U}_{r,l}^{\ab}(f)(\theta) := 
	\sum_{k=l+1}^{\infty} r^{|n+A_{\ab}|} a_n^{\ab}(f)\, \phi_n^{\ab}(\theta),
$$
producing
$$
D^{(k)} \widetilde{U}_{r,l}^{\ab}(f)(\theta) 
	= \widetilde{U}_{r,l-k}^{\alpha+k,\beta+k}\big(D^{(k)}f\big)(\theta), \qquad l \ge k,
$$
where $r$ and $\theta$ are as before. This identity combined with H\"older's inequality and \eqref{bnd}
leads to the estimates
\begin{align} \nonumber
\big\| D^{(k)} \widetilde{U}_{r,l}^{\ab}(f)\big\|_p & \le \big\|D^{(k)}f\big\|_p \sum_{n=l+1-k}^{\infty}
	r^{|n+A_{\alpha+k,\beta+k}|} \big\|\phi_n^{\alpha+k,\beta+k}\big\|_p\, \big\|\phi_n^{\alpha+k,\beta+k}\big\|_{p'} \\
& \le \big\|D^{(k)}f\big\|_p \sum_{n=l+1-k}^{\infty} r^{|n+A_{\alpha+k,\beta+k}|}
	n^{2(\alpha+\beta+2k+2)}. \label{tk2}
\end{align}

Now, choose an arbitrary $\varepsilon > 0$. By \eqref{19'} and Theorem \ref{thm:conv} (d)
$$
\big\| D^{(k)} \big[ \widetilde{U}_{r_0}^{\ab}(f)- f \big] \big\|_p < \varepsilon, \qquad 0 \le k \le m,
$$
for some $0 < r_0 < 1$. Further, by \eqref{tk2}, there exists $l_0$ depending on $r_0$ such that
$$
\big\| D^{(k)}\widetilde{U}_{r_0,l_0}^{\ab}(f)\big\|_p < \varepsilon, \qquad 0 \le k \le m.
$$
Thus
$$
\big\|  \widetilde{U}_{r_0}^{\ab}(f) - \widetilde{U}_{r_0,l_0}^{\ab}(f) - f
	\big\|_{W_{\ab}^{p,m}} < 2(m+1)\varepsilon.
$$
Since $\widetilde{U}_{r_0}^{\ab}(f) - \widetilde{U}_{r_0,l_0}^{\ab}(f)$ belongs to $S_{\ab}$, 
the conclusion follows.
\end{proof}

We continue by showing $L^p$-boundedness of some variants of higher order Riesz-Jacobi transforms
and certain related operators.
\begin{prop}\label{4'}
Let $\alpha,\beta>-1$ and let $k \ge 0$. Assume that $p\in E(\ab)$. Then the operators
\begin{align*}
R_{\ab}^{k,1}&=D^{(k)}L_{\ab}^{-k/2}, \qquad \alpha+\beta\neq-1,\\
\widetilde{R}_{\ab}^{k,1}&=D^{(k)}(\id+L_{\ab})^{-k/2},
\end{align*}
defined initially on $S_{\ab}$, extend to bounded operators on $L^p(0,\pi)$.
\end{prop}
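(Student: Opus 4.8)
The plan is to realize each operator, acting on the basis $\{\phi_n^{\ab}\}$, as a multiplier-transplantation operator of precisely the form covered by Lemma \ref{1'}, and then to verify that the associated multiplier sequence obeys the required smoothness condition. First I would compute the action on the Jacobi functions. For $f \in S_{\ab}$ the operator $L_{\ab}^{-k/2}$ acts diagonally, $L_{\ab}^{-k/2}\phi_n^{\ab} = (\lambda_n^{\ab})^{-k/2}\phi_n^{\ab} = (n+A_{\ab})^{-k}\phi_n^{\ab}$ (legitimate precisely when $\alpha+\beta \neq -1$, so that $\lambda_n^{\ab}>0$ for every $n$), and then the first identity of Lemma \ref{12'} gives
$$
R_{\ab}^{k,1}\phi_n^{\ab} = (-1)^k (n+A_{\ab})^{-k}\sqrt{(n-k+1)_k\,(n+\alpha+\beta+1)_k}\;\phi_{n-k}^{\alpha+k,\beta+k}.
$$
Thus $R_{\ab}^{k,1}$ is the operator of Lemma \ref{1'} with target parameters $\gamma=\alpha+k$, $\delta=\beta+k$, shift $d=-k$ (the convention $\phi_m^{\gamma,\delta}\equiv0$ for $m<0$ disposing of the terms $n<k$), and multiplier $g(n)=(-1)^k(n+A_{\ab})^{-k}\sqrt{(n-k+1)_k\,(n+\alpha+\beta+1)_k}$. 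For $\widetilde R_{\ab}^{k,1}$ the only change is that the diagonal factor becomes $(1+\lambda_n^{\ab})^{-k/2}=(1+(n+A_{\ab})^2)^{-k/2}$, which is well defined for all $\alpha,\beta>-1$; the transplantation parameters are the same.

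The crucial point to check is that these sequences admit, for large $n$, an asymptotic expansion $g(n)=\sum_{j=0}^{J-1}c_j n^{-j}+\mathcal{O}(n^{-J})$ for every $J$. Both Pochhammer factors $(n-k+1)_k$ and $(n+\alpha+\beta+1)_k$ are polynomials in $n$ of degree $k$ with leading coefficient $1$ and are positive for large $n$, so their product is a degree-$2k$ polynomial of the form $n^{2k}(1+\mathcal{O}(1/n))$ with positive leading term. Writing its square root as $n^k$ times the square root of $1+(\text{polynomial in }1/n)$ and expanding by the binomial series produces a convergent, hence to-all-orders asymptotic, expansion $n^k(1+\sum_{j\ge1}b_j n^{-j})$. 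Multiplying by $(n+A_{\ab})^{-k}=n^{-k}(1+A_{\ab}/n)^{-k}$, respectively by $(1+(n+A_{\ab})^2)^{-k/2}=n^{-k}(1+\mathcal{O}(1/n))^{-k/2}$, each of which is itself expandable in powers of $n^{-1}$, the factors $n^{\pm k}$ cancel and we obtain the required expansion of $g(n)$, with $J$ taken as large as needed (in particular $J\ge\alpha+\beta+\gamma+\delta+6=2\alpha+2\beta+2k+6$). I expect this verification of the asymptotic expansion, and in particular the careful handling of the square root, to be the only genuinely technical step.

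It then remains to check that $p\in E(\ab)$ falls in the admissible range $p'(\gamma,\delta)<p<p(\alpha,\beta)$ of Lemma \ref{1'}. The upper constraint $p<p(\alpha,\beta)=p(\ab)$ is part of the very definition of $E(\ab)$. For the lower one, since $\gamma=\alpha+k\ge\alpha$ and $\delta=\beta+k\ge\beta$ we have $p(\gamma,\delta)\ge p(\alpha,\beta)$, whence $p'(\gamma,\delta)\le p'(\alpha,\beta)=p'(\ab)<p$. Consequently Lemma \ref{1'} applies and shows that $R_{\ab}^{k,1}$ (for $\alpha+\beta\neq-1$) and $\widetilde R_{\ab}^{k,1}$ extend to bounded operators on $L^p(0,\pi)$; these extensions are unique because $S_{\ab}$ is dense in $L^p$ for $p<p(\ab)$. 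This completes the argument.
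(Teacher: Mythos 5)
Your proposal is correct and follows essentially the same route as the paper: both compute the action on $S_{\ab}$ via Lemma \ref{12'}, recognize $R_{\ab}^{k,1}$ (and $\widetilde{R}_{\ab}^{k,1}$) as a multiplier-transplantation operator with shift $d=-k$ and target parameters $(\alpha+k,\beta+k)$, verify the smoothness hypothesis by expanding the multiplier in powers of $1/n$ (the paper phrases this as analyticity of $h(x)=g(1/x)$ at $x=0$, you as a binomial-series expansion of the square root), and then invoke Lemma \ref{1'}. Your explicit verification that $p\in E(\ab)$ lies in the admissible range $p'(\alpha+k,\beta+k)<p<p(\ab)$, and the remark on uniqueness of the extension, are details the paper leaves implicit.
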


\begin{proof}
We first focus on $R_{\ab}^{k,1}$. 
Using Lemma \ref{12'} we get
$$
R_{\ab}^{k,1}f=\sum_{n=k}^{\infty}g(n)\,a_n^{\ab}(f)\,\phi_{n-k}^{\alpha+k, \beta+k},\qquad f\in S_{\ab},
$$
where 
$$
g(n)=(-1)^{k}\sqrt{(n-k+1)_k\, (n+\alpha+\beta+1)_k}\;{{|n+A_{\ab}|}^{-k}}
=(-1)^k\sqrt\frac{w(n)}{{(n+A_{\ab})}^{2k}}\,, 
$$
and here $w$ is a polynomial of degree $2k$.

Consider now the function 
$$
h(x)=g\Big(\frac{1}{x}\Big)=(-1)^k\sqrt\frac{x^{2k}w(\frac{1}{x})}{(1+xA_{\ab})^{2k}}.
$$
Here the numerator and the denominator of the fraction under the square root are polynomials, each of them having value 1 at $x=0.$  Thus $h(x)$ is analytic in a neighborhood of $x=0$. In particular, for any fixed $J\ge1$ we have the representation
$$g(n)=h\Big(\frac{1}{n}\Big)=\sum_{j=0}^{J-1}c_j \Big(\frac{1}{n}\Big)^{j}+\mathcal O\bigg(\Big(\frac{1}{n}\Big)^{J}\bigg),$$
provided that $n$ is sufficiently large. Therefore $g$ satisfies the assumptions of Lemma \ref{1'} and the $L^p$-boundedness of $R_{\ab}^{k,1}$ follows.

The case of $\widetilde{R}_{\ab}^{k,1}$ is analogous and is left to the reader.
\end{proof}

The next result states that operators playing the role of conjugates of $R_{\ab}^{k,1}$ and
$\widetilde{R}_{\ab}^{k,1}$ are also bounded on $L^p$.
\begin{prop}\label{5'}
Let $\alpha,\beta>-1$ and let $k \ge 0$. Assume that $p\in E(\ab)$. Then the operators
\begin{align*}
R_{\ab}^{k,2}&=\big(D^{(k)}\big)^*\,L_{\alpha+k,\beta+k}^{-k/2}, \qquad \alpha+\beta \neq -1,\\
\widetilde{R}_{\ab}^{k,2}&=\big(D^{(k)}\big)^*\,(\id+L_{\alpha+k,\beta+k})^{-k/2},
\end{align*}
defined initially on $S_{\alpha+k,\beta+k}$, 
extend to bounded operators on $L^p(0,\pi)$.
\end{prop}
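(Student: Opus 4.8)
The plan is to mirror the proof of Proposition \ref{4'}, with the two identities of Lemma \ref{12'} interchanged, and to reduce everything to the multiplier-transplantation theorem (Lemma \ref{1'}). First I would treat $R_{\ab}^{k,2}$. For $f\in S_{\alpha+k,\beta+k}$ we have $L_{\alpha+k,\beta+k}^{-k/2}f=\sum_{n=0}^{\infty}(\lambda_n^{\alpha+k,\beta+k})^{-k/2}\,a_n^{\alpha+k,\beta+k}(f)\,\phi_n^{\alpha+k,\beta+k}$, and applying $(D^{(k)})^*$ term by term together with the second identity of Lemma \ref{12'} gives
$$
R_{\ab}^{k,2}f=\sum_{n=0}^{\infty} g(n)\,a_n^{\alpha+k,\beta+k}(f)\,\phi_{n+k}^{\ab},\qquad f\in S_{\alpha+k,\beta+k},
$$
where, using $\lambda_n^{\alpha+k,\beta+k}=(n+A_{\ab}+k)^2$,
$$
g(n)=(-1)^k\,\sqrt{\frac{(n+1)_k\,(n+k+\alpha+\beta+1)_k}{(n+A_{\ab}+k)^{2k}}}.
$$
This is precisely an operator of the type covered by Lemma \ref{1'}, with the input system $\phi_n^{\alpha+k,\beta+k}$, the output system $\phi_{n+k}^{\ab}$, and shift $d=k$. (Interchanging $(D^{(k)})^*$ with the summation is legitimate by the same argument via \eqref{bnd} used in Proposition \ref{p2'}.)

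Next I would check that $g$ satisfies the smoothness hypothesis of Lemma \ref{1'}. The expression under the square root is a quotient of two monic polynomials in $n$ of degree $2k$, each with value $1$ at infinity; hence, exactly as in the proof of Proposition \ref{4'}, the substitution $x=1/n$ turns $g(1/x)$ into a function analytic in a neighbourhood of $x=0$ with value $(-1)^k$ there. Consequently $g$ admits, for every $J\ge1$, an expansion $g(n)=\sum_{j=0}^{J-1}c_j\,n^{-j}+\mathcal O(n^{-J})$ for large $n$, and since the expansion is valid for arbitrary $J$ one simply chooses $J$ large enough to meet the requirement $J\ge 2(\alpha+\beta)+2k+6$ of Lemma \ref{1'}.

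It then remains to verify the range of $p$. In the notation of Lemma \ref{1'} the source parameters are $(\alpha+k,\beta+k)$ and the target parameters are $(\gamma,\delta)=(\alpha,\beta)$, so the lemma yields boundedness for $p'(\alpha,\beta)<p<p(\alpha+k,\beta+k)$. Now $p'(\alpha,\beta)=p'(\ab)$, and by the very definition of $p(\cdot,\cdot)$ raising both type parameters by $k\ge0$ cannot decrease $p(\cdot,\cdot)$, so $p(\alpha+k,\beta+k)\ge p(\ab)$. Therefore $E(\ab)=(p'(\ab),p(\ab))\subset\big(p'(\alpha,\beta),p(\alpha+k,\beta+k)\big)$, and Lemma \ref{1'} applies for every $p\in E(\ab)$, giving the asserted $L^p$-boundedness of $R_{\ab}^{k,2}$. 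For $\widetilde{R}_{\ab}^{k,2}$ the only change is that the multiplier becomes $(-1)^k\sqrt{(n+1)_k\,(n+k+\alpha+\beta+1)_k}\,(1+(n+A_{\ab}+k)^2)^{-k/2}$; the ratio under the square root is again a quotient of monic degree-$2k$ polynomials tending to $1$, so the analyticity argument and the range computation are identical and the proof goes through unchanged.

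I expect the only genuinely delicate point to be the bookkeeping of parameters: one must make sure that the shifted pair $(\alpha+k,\beta+k)$ enters Muckenhoupt's theorem as the \emph{source} parameters while $(\alpha,\beta)$ plays the role of the \emph{target} parameters (the reverse of Proposition \ref{4'}), and that the admissible interval produced this way still contains all of $E(\ab)$. This hinges on the monotonicity $p(\alpha+k,\beta+k)\ge p(\ab)$, which is immediate from the definition of $p(\cdot,\cdot)$; everything else is a routine repetition of the proof of Proposition \ref{4'}.
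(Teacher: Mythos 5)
Your proposal is correct and takes essentially the same approach as the paper: expand $R_{\ab}^{k,2}$ via the second identity of Lemma \ref{12'} into a multiplier-transplantation operator with the multiplier $g(n)=(-1)^k(n+k+A_{\ab})^{-k}\sqrt{(n+1)_k\,(n+k+\alpha+\beta+1)_k}$, and then apply Lemma \ref{1'} after the same analyticity argument used for Proposition \ref{4'}. Your explicit bookkeeping of the source parameters $(\alpha+k,\beta+k)$ versus target parameters $(\alpha,\beta)$, and the monotonicity $p(\alpha+k,\beta+k)\ge p(\ab)$ guaranteeing that $E(\ab)$ lies inside the admissible range, is precisely what the paper compresses into the phrase ``as in the proof of Proposition \ref{4'}''.
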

\begin{proof}
Using the second identity of Lemma \ref{12'} we obtain
$$
R_{\ab}^{k,2}f=\sum_{n=0}^{\infty}g(n)\,a_{n}^{\alpha+k,\,\beta+k}(f)\,\phi_{n+k}^{\ab},
	\qquad f \in S_{\alpha+k,\beta+k},
$$
where 
$$
g(n)=\frac{(-1)^k}{(n+k+A_{\ab})^k}\sqrt{(n+1)_k\,(n+k+\alpha+\beta+1)_k}.
$$
As in the proof of Proposition \ref{4'} one verifies that $g(n)$ 
satisfies the assumptions of Lemma \ref{1'} and the conclusion follows.
The treatment of $\widetilde{R}_{\ab}^{k,2}$ relies on the same argument.
\end{proof}
A straightforward computation reveals that for $f\in S_{\ab}$
\begin{align*}
R_{\ab}^{k,2}R_{\ab}^{k,1}f&=\sum_{n=k}^{\infty}(n-k+1)_k\,(n+\alpha+\beta+1)_k
	\,(n+A_{\ab})^{-2k}\,a_n^{\ab}(f)\,\phi_n^{\ab},\\
\widetilde{R}_{\ab}^{k,2}\widetilde{R}_{\ab}^{k,1}f&=
\sum_{n=k}^{\infty}(n-k+1)_k\,(n+\alpha+\beta+1)_k
\,\big(1+(n+A_{\ab})^2\big)^{-k}\,a_n^{\ab}(f)\,\phi_n^{\ab},
\end{align*}
where in the first case we tacitly assume that $\alpha+\beta \neq -1$.
The operators that appear in the proposition below are the inverses of 
$R_{\ab}^{k,2}R_{\ab}^{k,1}$ and $\widetilde{R}_{\ab}^{k,2}\widetilde{R}_{\ab}^{k,1}$, respectively, 
on the subspace 
$$
\big\{f\in S_{\ab}:a_n^{\ab}(f)=0 \;\; \textrm{for} \;\; n \le k-1\big\} \subset S_{\ab}.
$$

\begin{prop}\label{6'}
Let $\alpha,\beta>-1$ and let $k\ge 0$. Assume that $p\in E(\ab)$. Then the operators
\begin{align*}
T_{\ab}^{k}f&=\sum_{n=k}^{\infty}\frac{(n+A_{\ab})^{2k}}{(n-k+1)_k \, (n+\alpha+\beta+1)_k}
	\,a_{n}^{\ab}(f)\,\phi_n^{\ab}, \qquad \alpha+\beta \neq -1,\\
\widetilde{T}_{\ab}^{k}f&=\sum_{n=k}^{\infty}\frac{\big[1+(n+A_{\ab})^2\big]^{k}}{(n-k+1)_k 
\, (n+\alpha+\beta+1)_k}\,a_{n}^{\ab}(f)\,\phi_n^{\ab},
\end{align*}
defined initially on $S_{\ab}$, extend to bounded operators on $L^p(0,\pi)$.
\end{prop}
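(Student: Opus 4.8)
The plan is to treat both operators exactly as in the proofs of Propositions \ref{4'} and \ref{5'}, recognizing each as a multiplier operator \emph{within the single system} $\{\ph\}$ (no transplantation of the index), and then invoking Lemma \ref{1'} with $\gamma=\alpha$, $\delta=\beta$ and $d=0$. For $T_{\ab}^{k}$ the relevant multiplier sequence is
$$
g(n)=\frac{(\ww)^{2k}}{(n-k+1)_k\,(n+\alpha+\beta+1)_k},
$$
and for $\widetilde T_{\ab}^{k}$ it is the analogous sequence with numerator $[1+(\ww)^2]^{k}$. A small bookkeeping point is that the defining series start at $n=k$ rather than $n=0$; I would absorb this by setting $g(n):=0$ for $0\le n\le k-1$. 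Since Lemma \ref{1'} requires the asymptotic expansion only for sufficiently large $n$, this modification is harmless, and with it the operator $f\mapsto\sum_{n=0}^{\infty}g(n)\,a_n^{\ab}(f)\,\ph$ reproduces $T_{\ab}^{k}$ verbatim. For $n\ge k\ge 1$ the denominator is a product of strictly positive factors (here one uses $\alpha+\beta>-2$, so that $n+\alpha+\beta+1>0$), hence $g$ is well defined.

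The key step is to verify that $g$ satisfies the smoothness hypothesis of Lemma \ref{1'}, and here I would repeat the device from Proposition \ref{4'}. Both the numerator $(\ww)^{2k}$ and the denominator $(n-k+1)_k(n+\alpha+\beta+1)_k$ are polynomials in $n$ of degree exactly $2k$ with leading coefficient $1$, each Pochhammer factor being monic of degree $k$. Setting $h(x):=g(1/x)$ and clearing the common factor $x^{-2k}$ from numerator and denominator, one obtains
$$
h(x)=\frac{(1+A_{\ab}x)^{2k}}{\prod_{i=0}^{k-1}\big(1-(k-1-i)x\big)\big(1+(\alpha+\beta+1+i)x\big)},
$$
a ratio of polynomials in $x$ both equal to $1$ at $x=0$. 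Thus $h$ is analytic in a neighborhood of the origin, and its Taylor expansion yields, for any prescribed $J$,
$$
g(n)=\sum_{j=0}^{J-1}c_j\Big(\tfrac1n\Big)^{j}+\mathcal O\big(n^{-J}\big),\qquad n\ \textrm{large}.
$$
Choosing $J\ge 2\alpha+2\beta+6$ meets the hypothesis of Lemma \ref{1'}, and since $p\in E(\ab)$ is precisely the range $p'(\ab)<p<p(\ab)$, the $L^p$-boundedness of $T_{\ab}^{k}$ follows.

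For $\widetilde T_{\ab}^{k}$ the argument is identical, the only change being in the numerator. The same substitution $x=1/n$ gives $[1+(\ww)^2]^{k}=x^{-2k}\big[x^2+(1+A_{\ab}x)^2\big]^{k}$, so after cancelling $x^{-2k}$ the numerator of $h$ becomes $\big[x^2+(1+A_{\ab}x)^2\big]^{k}$, which again equals $1$ at $x=0$. Hence $h$ is once more analytic near the origin, the expansion holds, and Lemma \ref{1'} applies in the same way.

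Since the whole argument is a routine application of Muckenhoupt's theorem, I do not expect a genuine obstacle. The only point demanding a little care is the one flagged above: confirming that the shift of the summation index is harmless and that the monic degree-$2k$ matching of numerator and denominator really makes $h$ \emph{analytic} (rather than merely meromorphic) at $x=0$, which is exactly what guarantees the full asymptotic expansion with an $\mathcal O(n^{-J})$ remainder for arbitrarily large $J$.
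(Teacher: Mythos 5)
Your proposal is correct and follows exactly the route the paper takes: the paper's proof of Proposition \ref{6'} consists precisely of a direct application of Lemma \ref{1'} via the analyticity device $h(x)=g(1/x)$ from the proofs of Propositions \ref{4'} and \ref{5'}, which you have spelled out in full (including the harmless truncation $g(n):=0$ for $n\le k-1$ and the verification that numerator and denominator are monic of degree $2k$, so that $h$ is analytic at the origin). No gaps; your write-up is simply a more detailed version of the paper's one-line argument.
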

\begin{proof}
The reasoning is based on a direct application of Lemma \ref{1'}, 
see the proofs of Propositions \ref{4'} and \ref{5'}.
\end{proof} 

Finally, we are in a position to prove Theorem \ref{thm:sob1}.
\begin{proof}[Proof of Theorem \ref{thm:sob1}]
Recall that $S_{\ab}$ is a dense subspace of $W_{\ab}^{p,m}$ (Proposition \ref{p2'}) and of $\Lp$ (Corollary \ref{gestosc}). Moreover, if $f_n\rightarrow f$, either in $W_{\ab}^{p,m}$ or in $\Lp$, then also $f_n\rightarrow f$ in $L^p$. This implication is trivial in case of convergence in $W_{\ab}^{p,m},$ and in the other case it follows by Proposition \ref{prop:1'}. Hence the two spaces have the same elements and to prove that they coincide as Banach spaces it suffices to show that the norms in $W_{\ab}^{p,m}$ and $\Lp$ are equivalent on $S_{\ab}$, i.e.\ there is $C>0$ such that
$$
C^{-1}\|f\|_{W_{\ab}^{p,m}}\le\|f\|_{\Lp}\le C\|f\|_{W_{\ab}^{p,m}},\qquad f\in S_{\ab}.
$$
To proceed, we assume that $\alpha+\beta \neq -1$. The complementary case requires only minor modifications
(including replacements of $R_{\ab}^{m,1}$, $R_{\ab}^{m,2}$ and $T_{\ab}^{m}$ by their tilded counterparts)
and is left to the reader.

Let $f\in S_{\ab}$ and take $g\in S_{\ab}$ such that $f=L_{\ab}^{-m/2}g$. 
We write $g=g_1+g_2$, where $g_1=\sum_{n=0}^{m-1}a_n^{\ab}(g)\phi_n^{\ab}=\sum_{n=0}^{m-1}|n+A_{\ab}|^m a_n^{\ab}(f)\phi_n^{\ab}$. Then observing that $R_{\ab}^{m,1}g_2=R_{\ab}^{m,1}g$ and using Propositions \ref{5'} and \ref{6'} we obtain
\begin{align*}
\|f\|_{\Lp}&=\|g\|_p\le \|g_1\|_{p}+\|g_2\|_{p}\\
&\le\|f\|_p \sum_{n=0}^{m-1} |n+A_{\ab}|^m \|\ph\|_p\,\|\ph\|_{p'}+\big\|T_{\ab}^m R_{\ab}^{m,2} R_{\ab}^{m,1} g\big\|_p\\
&\le C\Big(\|f\|_p+\big\|R_{\ab}^{m,1} g\big\|_p\Big)=C\Big(\|f\|_p+\big\|D^{(m)}f\big\|_p\Big)\le C\|f\|_{W_{\ab}^{p,m}}.
\end{align*}

To prove the reverse estimate we apply Propositions \ref{4'} and \ref{prop:1'} and get
\begin{align*}
\|f\|_{W_{\ab}^{p,m}}&=\sum_{k=0}^m \big\|D^{(k)}f\big\|_p=\sum_{k=0}^m \big\|D^{(k)}L_{\ab}^{-m/2} g\big\|_p=\sum_{k=0}^m \big\|R_{\ab}^{k,1} L_{\ab}^{-(m-k)/2}g\big\|_p\\
&\le C\sum_{k=0}^m \big\|L_{\ab}^{-(m-k)/2}g\big\|_p\le C\|g\|_p=C\|f\|_{\Lp}.
\end{align*}

The proof of Theorem \ref{thm:sob1} is complete.
\end{proof}

\section{Sobolev spaces defined by interlacing derivatives} \label{sec:alternating}

In this section we prove Theorem \ref{thm:sob2}. Thus the higher-order `derivative' under consideration
is $\mathbb{D}^{(k)} = \mathcal{D}^{(k)}$, the operator emerging from interlacing $D_{\ab}$ and its
adjoint. We start with a simple result describing the action of $\mathcal{D}^{(k)}$ on the Jacobi functions.
\begin{lem} \label{lem:inter}
Let $\ab > -1$. Then for any $k,n \ge 0$,
$$
\D^{(k)}\phi_n^{\ab}=(-1)^k\big[n(n+\alpha+\beta+1)\big]^{k/2} 
											\begin{cases} 
													\phi_n^{\ab}, &  k \;\,\textrm{\emph{even}},\\
							 						\phi_{n-1}^{\alpha+1,\beta+1}, & k\;\,\textrm{\emph{odd}}.
											\end{cases}
$$
\end{lem}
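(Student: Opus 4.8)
The statement describes the action of the interlacing operator $\mathcal{D}^{(k)}$ on the Jacobi functions, and the natural approach is a direct computation based on two facts already available in the excerpt: the raising/lowering formula \eqref{pochodna}, which gives
$$
D_{\ab}\phi_{n}^{\ab}=-\sqrt{n(n+\alpha+\beta+1)}\,\phi_{n-1}^{\alpha+1,\beta+1},
$$
and its companion \eqref{15'},
$$
D_{\ab}^*\,\phi_{n-1}^{\alpha+1,\beta+1}=-\sqrt{n(n+\alpha+\beta+1)}\,\phi_n^{\ab}.
$$
The key structural observation is that $\mathcal{D}^{(k)}$ is built by interlacing $D_{\ab}$ with its adjoint $D_{\ab}^*$, \emph{keeping the pair of type parameters $\ab$ fixed throughout}. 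This is the essential difference from $D^{(k)}$, and it makes the relevant eigenvalue factors repeat rather than vary with $k$.

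First I would treat the two parities separately. The rightmost factor of $\mathcal{D}^{(k)}$ is always $D_{\ab}$, so applying $\mathcal{D}^{(k)}$ to $\phi_n^{\ab}$ begins with \eqref{pochodna}, producing a factor $-\sqrt{n(n+\alpha+\beta+1)}$ and moving us to $\phi_{n-1}^{\alpha+1,\beta+1}$. The crucial point is that the \emph{next} operator applied is $D_{\ab}^*$ (not $D_{\alpha+1,\beta+1}^*$), so by \eqref{15'} it sends $\phi_{n-1}^{\alpha+1,\beta+1}$ back to $\phi_n^{\ab}$, picking up a second identical factor $-\sqrt{n(n+\alpha+\beta+1)}$. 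Thus each consecutive pair $D_{\ab}^*D_{\ab}$ returns us to $\phi_n^{\ab}$ and contributes exactly $n(n+\alpha+\beta+1)$, consistent with the relation $D_{\ab}^*D_{\ab}\phi_n^{\ab}=\big(\lambda_n^{\ab}-\lambda_0^{\ab}\big)\phi_n^{\ab}=n(n+\alpha+\beta+1)\phi_n^{\ab}$ recorded in the proof of Lemma~\ref{12'}.

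The computation then splits cleanly. When $k=2l$ is even, $\mathcal{D}^{(k)}$ consists of $l$ full pairs $D_{\ab}^*D_{\ab}$, so iterating the two displayed formulas $l$ times gives the factor $\big[n(n+\alpha+\beta+1)\big]^{l}=\big[n(n+\alpha+\beta+1)\big]^{k/2}$ and returns $\phi_n^{\ab}$; the sign is $(-1)^{2l}=(-1)^k$ after collecting the $2l$ copies of $-1$. When $k=2l+1$ is odd, we have $l$ complete pairs followed by one extra $D_{\ab}$ on the right, which applies \eqref{pochodna} once more: this yields an additional factor $-\sqrt{n(n+\alpha+\beta+1)}$, producing the half-integer power $\big[n(n+\alpha+\beta+1)\big]^{(2l+1)/2}=\big[n(n+\alpha+\beta+1)\big]^{k/2}$, and lands on $\phi_{n-1}^{\alpha+1,\beta+1}$, again with overall sign $(-1)^k$. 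A clean way to organize this is a short induction on $k$, stepping two at a time and invoking \eqref{pochodna} or \eqref{15'} at the base of each parity class; alternatively one simply counts the factors directly.

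There is no serious obstacle here, as every ingredient is already in place. The only point demanding a little care is the bookkeeping of \emph{which} operator acts at each stage: because $\mathcal{D}^{(k)}$ fixes the parameters at $\ab$, one must verify that the alternation of $D_{\ab}$ and $D_{\ab}^*$ matches the alternation of $\phi_n^{\ab}$ and $\phi_{n-1}^{\alpha+1,\beta+1}$ so that each of \eqref{pochodna} and \eqref{15'} is applicable at exactly the right step. Tracking the boundary cases $n=0$ (where $\phi_{-1}^{\alpha+1,\beta+1}\equiv 0$ by the convention $\phi_m^{\gamma,\delta}\equiv 0$ for $m<0$) confirms the formula remains valid, since the prefactor $\big[n(n+\alpha+\beta+1)\big]^{k/2}$ vanishes there as well.
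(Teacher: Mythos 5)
Your proof is correct and is essentially the paper's own argument: the paper's proof of Lemma \ref{lem:inter} consists of the single line ``a direct computation based on \eqref{pochodna} and \eqref{15'}'', which is exactly the alternating application of these two identities (with the crucial observation that the parameters stay fixed at $\ab$) that you carry out in detail, parity by parity. Your additional remarks on the sign bookkeeping and the degenerate case $n=0$ are consistent with the paper's conventions and require no changes.
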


\begin{proof}
A direct computation based on \eqref{pochodna} and \eqref{15'}.
\end{proof}

Next, we show that higher-order Riesz-Jacobi transforms defined by means of $\mathcal{D}^{(k)}$ are
bounded on $L^p$.

\begin{prop}\label{4''}
Let $\alpha,\beta>-1$ and let $k \ge 0$. Assume that $p\in E(\ab)$. Then the operators
\begin{align*}
\mathcal R_{\ab}^{k}f&=\mathcal D^{(k)}L_{\ab}^{-k/{2}}f,\qquad \alpha+\beta\neq -1,\\
\widetilde{\mathcal R}_{\ab}^{k}f&=\mathcal D^{(k)}(\id+L_{\ab})^{-k/{2}}f,
\end{align*}
defined initially on $S_{\ab}$, extend to bounded operators on $L^p(0,\pi).$
\end{prop}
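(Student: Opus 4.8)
The plan is to realize $\mathcal{R}_{\ab}^{k}$ and $\widetilde{\mathcal{R}}_{\ab}^{k}$ as multiplier-transplantation operators of the form covered by Lemma \ref{1'}, following exactly the pattern of the proof of Proposition \ref{4'}. First I would compute the action on the basis. For $f \in S_{\ab}$, using $L_{\ab}^{-k/2}\ph = |n+A_{\ab}|^{-k}\ph$ together with Lemma \ref{lem:inter}, I obtain
$$
\mathcal{R}_{\ab}^{k}f = \sum_{n=0}^{\infty} g(n)\, a_n^{\ab}(f)\, \psi_n,
\qquad
g(n) = (-1)^k |n+A_{\ab}|^{-k}\big[n(n+\alpha+\beta+1)\big]^{k/2},
$$
where $\psi_n = \phi_n^{\ab}$ when $k$ is even and $\psi_n = \phi_{n-1}^{\alpha+1,\beta+1}$ when $k$ is odd. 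The two parities must be handled separately, because for odd $k$ the output lives in the system with shifted parameters $(\alpha+1,\beta+1)$ and shift $d=-1$, whereas for even $k$ we stay in the original system with $d=0$.

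Next I would check that $g$ satisfies the smoothness hypothesis of Lemma \ref{1'}, just as in Proposition \ref{4'}. Writing $g(n)^2 = [n(n+\alpha+\beta+1)]^k / (n+A_{\ab})^{2k}$, a quotient of two polynomials of degree $2k$ each with leading coefficient $1$, and passing to $h(x) = g(1/x)$, the substitution $n = 1/x$ gives
$$
h(x)^2 = \frac{(1+x(\alpha+\beta+1))^k}{(1+xA_{\ab})^{2k}},
$$
where the numerator and the denominator are polynomials both equal to $1$ at $x=0$. Hence $h$ is analytic in a neighbourhood of $x=0$, and consequently $g(n) = h(1/n)$ admits, for every $J \ge 1$ and all sufficiently large $n$, an expansion $\sum_{j=0}^{J-1} c_j\, n^{-j} + \mathcal{O}(n^{-J})$. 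Thus the hypotheses of Lemma \ref{1'} are met.

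Finally I would invoke Lemma \ref{1'}. In the even case the parameters are $\gamma=\alpha$, $\delta=\beta$, $d=0$, so the admissible range $p'(\ab) < p < p(\ab)$ is precisely $E(\ab)$. In the odd case, with $\gamma=\alpha+1$, $\delta=\beta+1$, $d=-1$, we have $\alpha+1,\beta+1 > -1/2$, hence $p(\alpha+1,\beta+1)=\infty$ and the admissible range $1 < p < p(\ab)$ contains $E(\ab)$. In either case every $p \in E(\ab)$ is admissible, and Lemma \ref{1'} furnishes the bounded extension to $L^p(0,\pi)$; uniqueness of the extension follows as usual from density of $S_{\ab}$. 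The operator $\widetilde{\mathcal{R}}_{\ab}^{k}$ is treated in the same way, now with the multiplier $\tilde g(n) = (-1)^k [n(n+\alpha+\beta+1)]^{k/2}(1+(n+A_{\ab})^2)^{-k/2}$; the substitution $n=1/x$ yields $\tilde g(1/x)^2 = (1+x(\alpha+\beta+1))^k / (x^2 + (1+xA_{\ab})^2)^k$, again analytic and equal to $1$ at $x=0$, so Lemma \ref{1'} applies verbatim.

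There is no serious obstacle here; the argument is essentially a transcription of Proposition \ref{4'}. The only point requiring a little care is the parity split together with the verification that, in both the even and the odd case, the range of $p$ produced by Lemma \ref{1'} does contain the full interval $E(\ab)$.
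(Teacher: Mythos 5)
Your proposal is correct and follows essentially the same route as the paper: expand $\mathcal{D}^{(k)}L_{\ab}^{-k/2}$ on the basis via Lemma \ref{lem:inter}, split according to the parity of $k$, verify the smoothness hypothesis of Lemma \ref{1'} through analyticity of $g(1/x)$ near $x=0$, and conclude $L^p$-boundedness; the paper merely states the odd case and the tilded operators are "analogous," whereas you spell out the parameter shift $(\gamma,\delta)=(\alpha+1,\beta+1)$, $d=-1$ and check that the resulting admissible range still contains $E(\ab)$, which is a worthwhile (and correct) verification.
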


\begin{proof}
Consider first the case of $k$ even. According to Lemma \ref{lem:inter}, we have
$$
\mathcal{R}_{\ab}^{k}f=\sum_{n=0}^{\infty}g(n)\, a_n^{\ab}(f)\, \ph,\qquad f \in S_{\ab},
$$
where the multiplier sequence is given by
$$
g(n)= \sqrt{\frac{[n(n+\alpha+\beta+1)]^k}{(n+A_{\ab})^{2k}}}.
$$
As easily verified (see the proof of Proposition \ref{4'}), the sequence $g(n)$ satisfies the 
assumptions of Lemma \ref{1'} and hence the $L^p$-boundedness of $\mathcal R_{\ab}^{k}$ follows.

The case of $k$ odd, as well as the treatment of $\widetilde{\mathcal R}_{\ab}^{k}$, is analogous. 
The conclusion is again a consequence of Lemma \ref{1'}.
\end{proof}

We are now ready to prove Theorem \ref{thm:sob2}.

\begin{proof}[Proof of Theorem \ref{thm:sob2}]
To show the inclusion we assume that $\alpha+\beta \neq -1$. The opposite case requires essentially the
same reasoning and thus is left to the reader.

Let $f\in \mathcal{L}_{\ab}^{p,m}$. By the definition of the potential space, 
there exists $g\in L^p$ such that $f=L^{-m/2}_{\ab}g$ and $\|f\|_{\mathcal{L}_{\ab}^{p,m}}=\|g\|_p$. 
Using Proposition \ref{4''} and then Proposition \ref{prop:1'} we see that, for each $k=0,1,\dots,m$,
\begin{equation*}
\big\|\D^{(k)}f\big\|_p=\big\|\D^{(k)}L_{\ab}^{-{m}/{2}}g\big\|_p
=\big\|\mathcal{R}_{\ab}^{k}L_{\ab}^{-(m-k)/{2}}g\big\|_p
\le C\big\|L_{\ab}^{-(m-k)/{2}}g\big\|_p\le C\|g\|_p
=\|f\|_{\mathcal{L}_{\ab}^{p,m}}.
\end{equation*}
It follows that $\mathcal{L}_{\ab}^{p,m}$ is continuously included in $\mathcal{W}_{\ab}^{p,m}$.

To demonstrate that the reverse inclusion does not hold in general, we give an explicit counterexample. 
For $\ab> -1$, consider the function
$$
f(\theta) = \Psi^{-\alpha,-\beta}(\theta) = \Big(\sin\frac{\theta}2\Big)^{-\alpha+1/2}
	\Big(\cos\frac{\theta}2\Big)^{-\beta+1/2}.
$$
Assume for simplicity that $\alpha \neq 0$ and $\beta \neq 0$.
A direct analysis shows that
\begin{align*}
f(\theta) & \le C\, \theta^{-\alpha+1/2}(\pi-\theta)^{-\beta+1/2},\\
|D_{\ab}f(\theta)| & \le C\, \theta^{-\alpha-1/2}(\pi-\theta)^{-\beta-1/2},\\
|D_{\ab}^* D_{\ab}f(\theta)| & \le C\, f(\theta),\\
|D_{\alpha+1,\beta+1}D_{\ab}f(\theta)| +1 & \ge C\, \theta^{-\alpha-3/2}(\pi-\theta)^{-\beta-3/2},
\end{align*}
where $C>0$ is independent of $\theta \in (0,\pi)$. It is now clear that if 
$p \in E(\alpha,\beta)$, and $\alpha \neq 0$ and
$\beta \neq 0$ are such that $\alpha < 1/p-1/2$ and $\beta < 1/p-1/2$, then $f \in \mathcal{W}_{\ab}^{p,2}$.
On the other hand, $D^{(2)}f \notin L^p$, so in view of Theorem \ref{thm:sob1} one has 
$f \notin \mathcal{L}_{\ab}^{p,2}$.
\end{proof}

Combining Theorem \ref{thm:sob2} with Theorem \ref{thm:sob1} reveals that the Sobolev spaces defined by means
of $\mathbb{D}^{(k)} = {D}^{(k)}$ are contained in those related to 
$\mathbb{D}^{(k)}=\mathcal{D}^{(k)}$, but they do not coincide in general.

\section{Final comments and remarks} \label{sec:fin}

We first point out some natural monotonicity properties of the potential spaces. Analogous facts
are easily seen to be true also for the Sobolev spaces.
\begin{prop}
Let $\ab > -1$. Assume that $p,q \in E(\ab)$. Then
\begin{enumerate}
\item[(a)] if $p \le q$, then $\mathcal{L}_{\ab}^{q,s} \subset \mathcal{L}_{\ab}^{p,s}$ for all $s \ge 0$;
\item[(b)] if $0 \le s \le t$, then $\mathcal{L}_{\ab}^{p,t} \subset \mathcal{L}_{\ab}^{p,s}$ for all $p$.
\end{enumerate}
Moreover, the embeddings in (a) and (b) are continuous.
\end{prop}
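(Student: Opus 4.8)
The plan is to reduce both embeddings to the corresponding boundedness of certain multiplier operators via Lemma \ref{1'}, combined with the basic structure of the potential spaces as ranges of potential operators. Throughout I would treat the generic case $\alpha+\beta\neq-1$ using $L_{\ab}^{-\sigma}$; the case $\alpha+\beta=-1$ is handled identically with the Bessel potentials $(\id+L_{\ab})^{-\sigma}$, and I would remark this once and leave it to the reader.

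For part (b), fix $p$ and $0\le s\le t$. Given $f\in\mathcal{L}_{\ab}^{p,t}$, write $f=L_{\ab}^{-t/2}g$ with $g\in L^p$ and $\|f\|_{\mathcal{L}_{\ab}^{p,t}}=\|g\|_p$. The natural idea is to exhibit $f$ also as $L_{\ab}^{-s/2}h$ for a suitable $h\in L^p$. Formally one wants $h=L_{\ab}^{(s-t)/2}g=L_{\ab}^{-(t-s)/2}g$, so I would first verify on $S_{\ab}$ the semigroup-type identity $L_{\ab}^{-s/2}L_{\ab}^{-(t-s)/2}=L_{\ab}^{-t/2}$ at the level of multipliers, which is immediate from the spectral definition. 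Since $t-s\ge 0$, the operator $h\mapsto L_{\ab}^{-(t-s)/2}g$ is $L^p$-bounded by Proposition \ref{prop:1'} (applied with exponent $(t-s)/2\ge0$; the case $t=s$ being trivial). Hence $h:=L_{\ab}^{-(t-s)/2}g\in L^p$ with $\|h\|_p\le C\|g\|_p$, and $f=L_{\ab}^{-s/2}h$, giving $f\in\mathcal{L}_{\ab}^{p,s}$ together with the continuous bound $\|f\|_{\mathcal{L}_{\ab}^{p,s}}\le\|h\|_p\le C\|g\|_p=C\|f\|_{\mathcal{L}_{\ab}^{p,t}}$.

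For part (a), fix $s\ge0$ and $p\le q$ with $p,q\in E(\ab)$. The elementary inclusion $L^q\subset L^p$ on the finite measure space $((0,\pi),d\theta)$ shows at once that $\mathcal{L}_{\ab}^{q,s}\subset\mathcal{L}_{\ab}^{p,s}$ as sets: if $f=L_{\ab}^{-s/2}g$ with $g\in L^q$, then $g\in L^p$ and $f=L_{\ab}^{-s/2}g\in\mathcal{L}_{\ab}^{p,s}$. For the continuity of the embedding I would argue on the dense subspace $S_{\ab}$ (dense in $\mathcal{L}_{\ab}^{q,s}$ by Corollary \ref{gestosc}), where for $f=L_{\ab}^{-s/2}g$ with $g\in S_{\ab}$ one has $\|f\|_{\mathcal{L}_{\ab}^{p,s}}=\|g\|_p\le\|g\|_q=\|f\|_{\mathcal{L}_{\ab}^{q,s}}$, the middle inequality being H\"older's inequality on the finite interval. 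The potential norms are well defined because the representing function $g$ is unique, by the injectivity of $L_{\ab}^{-s/2}$ established in Proposition \ref{prop:1'}.

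The only genuinely delicate point is the identity $L_{\ab}^{-s/2}L_{\ab}^{-(t-s)/2}=L_{\ab}^{-t/2}$ on $L^p$ rather than just on $S_{\ab}$: one needs that the function $h$ produced in part (b) indeed represents $f$ in the potential space, i.e.\ that applying $L_{\ab}^{-s/2}$ to $h\in L^p$ reproduces $f$. I would secure this by the now-standard density argument, exactly as in the proof of Proposition \ref{prop:1'}: the identity $a_n^{\ab}(L_{\ab}^{-s/2}h)=(\lambda_n^{\ab})^{-s/2}a_n^{\ab}(h)$ extends from $S_{\ab}$ to all of $L^p$ by boundedness of the relevant coefficient functionals, whence $L_{\ab}^{-s/2}h$ and $f$ have identical Fourier--Jacobi coefficients and therefore coincide by Lemma \ref{lem:wsp}.
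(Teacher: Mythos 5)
Your proposal is correct and takes essentially the same route as the paper: part (a) is the embedding $L^q\subset L^p$ on the finite interval via H\"older (note the inequality carries a constant, $\|g\|_p\le \pi^{1/p-1/q}\|g\|_q$, which is all that continuity requires, so your display with constant $1$ should be adjusted), and part (b) is the factorization $L_{\ab}^{-t/2}=L_{\ab}^{-s/2}L_{\ab}^{-(t-s)/2}$ combined with the boundedness and injectivity from Proposition \ref{prop:1'} --- exactly the argument the paper delegates to the Laguerre analogue in \cite{Graczyk}. Your additional care in extending the factorization identity from $S_{\ab}$ to $L^p$ via Fourier--Jacobi coefficients and Lemma \ref{lem:wsp} simply fills in details the paper leaves implicit.
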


\begin{proof}
To get (a) it is enough to use the fact that $\|\cdot\|_p$ is dominated by a constant times $\|\cdot\|_q$
when $p \le q$. Item (b) is a consequence of Proposition \ref{prop:1'}, see the proof of 
\cite[Proposition 6.3]{Graczyk} for the analogous argument in the Laguerre case.
\end{proof}

Next, we comment on the relation between the Sobolev spaces defined in this paper and the classical
Sobolev spaces $W^{p,m}(a,b)$ related to the interval $(a,b)$. The result below shows that there is
only a local connection, and in general $W_{\ab}^{p,m}$ and $\mathcal{W}^{p,m}_{\ab}$ cannot be compared with
$W^{p,m}(0,\pi)$ in terms of inclusion.
\begin{prop}\label{3'}
Let $\ab>-1$, $p \in E(\ab)$ and $m\ge 1$. Assume that $f$ is in $\mathbb{W}_{\ab}^{p,m}$, the Sobolev space
defined either by means of $\mathbb{D}^{(k)} = D^{(k)}$ or by means of $\mathbb{D}^{(k)}=\mathcal{D}^{(k)}$. 
Then
\begin{enumerate}
\item[(a)] $f\in W^{p,m}(a,b)$ whenever $0<a<b<\pi$;
\item[(b)] $f\in W^{p,m}(0,\pi),$ provided that $\support f\subset\subset(0,\pi)$.
\end{enumerate}
Furthermore, none of the inclusions $\mathbb{W}_{\ab}^{p,m} \subset W^{p,m}(0,\pi)$ and
$W^{p,m}(0,\pi)\subset \mathbb{W}_{\ab}^{p,m}$ is true in general. In particular, for 
$(\ab) \neq (-1/2,-1/2)$ there exists $f \in W^{p,m}(0,\pi)$ such that $f \notin \mathbb{W}^{p,m}_{\ab}$,
and for each $\ab$ satisfying $-1/2 \neq \ab \le 1/2-1/p$ there is $f \in \mathbb{W}^{p,m}_{\ab}$
such that $f \notin W^{p,m}(0,\pi)$.
\end{prop}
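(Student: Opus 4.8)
The plan is to establish the local regularity statements (a) and (b) first, then construct explicit counterexamples for the two non-inclusions. For part (a), I would fix $0<a<b<\pi$ and observe that on such a compact subinterval the weight $\Psi^{\ab}$ and its reciprocal are smooth and bounded away from $0$ and $\infty$, as are $\sin\theta$, $\cos\frac\theta2$, $\sin\frac\theta2$, and all their derivatives. Using the factorization $D_{\ab}f = \Psi^{\ab}\frac{d}{d\theta}(\frac{1}{\Psi^{\ab}}f)$ (and the analogous formula for $D_{\ab}^*$), together with the higher-order factorizations \eqref{fct}, the condition that $\mathbb{D}^{(k)}f \in L^p$ for $k=0,1,\dots,m$ translates on $(a,b)$ into control of the weak derivatives $\frac{d^k}{d\theta^k}f$. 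Concretely, I would argue inductively: knowing $f \in L^p(a,b)$ and $\mathbb{D}^{(1)}f \in L^p(a,b)$ forces $\frac{d}{d\theta}f \in L^p(a,b)$ because on $(a,b)$ the operator $\mathbb{D}^{(1)}$ differs from $\frac{d}{d\theta}$ by a smooth bounded zeroth-order term; iterating this and invoking the Leibniz rule recovers all derivatives up to order $m$ in $L^p(a,b)$, which is exactly membership in $W^{p,m}(a,b)$. This works identically for both choices $\mathbb{D}^{(k)}=D^{(k)}$ and $\mathbb{D}^{(k)}=\mathcal{D}^{(k)}$, since both reduce to $\frac{d^k}{d\theta^k}$ plus lower-order smooth coefficients away from the endpoints.

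Part (b) then follows almost immediately: if $\support f \subset\subset (0,\pi)$, choose $a,b$ with $0<a<b<\pi$ enclosing the support; by (a), $f \in W^{p,m}(a,b)$, and since $f$ vanishes near $0$ and $\pi$, extending by zero gives $f \in W^{p,m}(0,\pi)$ with the global weak derivatives agreeing with the local ones on $(a,b)$ and being zero elsewhere. No boundary terms arise precisely because of the compact support.

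For the failure of $W^{p,m}(0,\pi) \subset \mathbb{W}_{\ab}^{p,m}$ when $(\ab)\neq(-1/2,-1/2)$, I would take a smooth function $f$ that is nonzero near an endpoint, say $f \equiv 1$ near $\theta=0$. Such $f$ lies in $W^{p,m}(0,\pi)$ trivially. To see $f \notin \mathbb{W}_{\ab}^{p,m}$, I would compute $\mathbb{D}^{(1)}f = D_{\ab}f$ near $0$: the term $-\frac{2\alpha+1}{4}\cot\frac\theta2$ in $D_{\ab}$ produces a singularity of order $\theta^{-1}$ when $\alpha \neq -1/2$ (and symmetrically the $\tan\frac\theta2$ term gives a singularity at $\pi$ when $\beta\neq-1/2$), so $D_{\ab}f \sim \theta^{-1}$ near $0$, which fails to be in $L^p$ for $p$ in the relevant range. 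The condition $(\ab)\neq(-1/2,-1/2)$ guarantees at least one such singular coefficient is present, so $f$ cannot belong to $\mathbb{W}_{\ab}^{p,m}$ under either notion of higher-order derivative.

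For the reverse failure $\mathbb{W}_{\ab}^{p,m} \subset W^{p,m}(0,\pi)$ when $-1/2\neq\ab\le 1/2-1/p$, the natural candidate is again $f=\Psi^{-\alpha,-\beta}$ (or a suitable variant), exactly as in the counterexample constructed in the proof of Theorem \ref{thm:sob2}; this function is annihilated by $D_{\ab}$ up to controllable order and so has small $\mathbb{D}^{(k)}$-images, placing it in $\mathbb{W}_{\ab}^{p,m}$ under the stated parameter constraint, yet its ordinary derivatives blow up at the endpoints fast enough to leave $W^{p,m}(0,\pi)$. I expect the main obstacle to be bookkeeping the precise endpoint exponents: one must verify that the weighted combinations defining $\mathbb{D}^{(k)}f$ stay in $L^p$ while the unweighted classical derivatives $\frac{d^k}{d\theta^k}f$ do not, and this requires carefully tracking how each application of $D$ or $D^*$ shifts the power of $\sin\frac\theta2$ by $\pm1$ and comparing against the integrability threshold $\theta^{\gamma}\in L^p \iff \gamma p > -1$. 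The inequality $\ab \le 1/2-1/p$ is precisely the borderline ensuring the weighted quantities survive while the classical ones fail.
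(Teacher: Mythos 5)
Your treatment of (a), (b) and of the non-inclusion $W^{p,m}(0,\pi)\not\subset \mathbb{W}_{\ab}^{p,m}$ is correct and coincides with the paper's argument: locally the operators $\mathbb{D}^{(k)}$ differ from $\frac{d^k}{d\theta^k}$ only by smooth, bounded, non-vanishing coefficients, and the function $f\equiv 1$ (the paper takes it equal to $1$ on all of $(0,\pi)$, you take it equal to $1$ near an endpoint) has $D_{\ab}f\sim\theta^{-1}$ near $0$ when $\alpha\neq -1/2$ (resp.\ $\sim(\pi-\theta)^{-1}$ near $\pi$ when $\beta\neq-1/2$), which lies in no $L^p$.

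The genuine gap is in your last step, the failure of $\mathbb{W}_{\ab}^{p,m}\subset W^{p,m}(0,\pi)$. The function $f=\Psi^{-\alpha,-\beta}$ from the proof of Theorem \ref{thm:sob2} cannot serve here, for three reasons. First, it is \emph{not} annihilated (even approximately) by $D_{\ab}$: since $D_{\ab}=\Psi^{\ab}\,\frac{d}{d\theta}\circ(\Psi^{\ab})^{-1}$, the function killed by $D_{\ab}$ is $\Psi^{\ab}$, not $\Psi^{-\alpha,-\beta}$; your heuristic about ``small $\mathbb{D}^{(k)}$-images'' applies to the former, not the latter. Second, and fatally for the choice $\mathbb{D}^{(k)}=D^{(k)}$: the whole point of the counterexample in Theorem \ref{thm:sob2} is that $D^{(2)}\Psi^{-\alpha,-\beta}\notin L^p$, i.e.\ $\Psi^{-\alpha,-\beta}\notin W_{\ab}^{p,2}$, so this function does not even belong to the $D^{(k)}$-version of $\mathbb{W}_{\ab}^{p,m}$ for $m\ge 2$, whereas Proposition \ref{3'} requires a counterexample valid for \emph{both} versions. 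Third, even for the $\mathcal{D}^{(k)}$-version, membership of $\Psi^{-\alpha,-\beta}$ in $\mathcal{W}_{\ab}^{p,m}$ needs $|D_{\ab}\Psi^{-\alpha,-\beta}|\sim\theta^{-\alpha-1/2}(\pi-\theta)^{-\beta-1/2}$ to be in $L^p$, i.e.\ $\ab<1/p-1/2$; the hypothesis of the proposition is $-1/2\neq\ab\le 1/2-1/p$, which for $p\ge 2$ does not imply this (take $p=4$, $\alpha=\beta=1/5$: then $\ab\le 1/4$ but $D_{\ab}\Psi^{-\alpha,-\beta}\sim\theta^{-7/10}\notin L^4$). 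The correct, and much simpler, choice is $g=\Psi^{\ab}$, a constant multiple of $\phi_0^{\ab}$: then $D_{\ab}g=0$, hence $D^{(k)}g=\mathcal{D}^{(k)}g=0$ for all $k\ge 1$ (in both versions the innermost factor is $D_{\ab}$), so $g\in\mathbb{W}_{\ab}^{p,m}$ trivially, while $\frac{d}{d\theta}g\sim c_{\alpha}\,\theta^{\alpha-1/2}$ near $0$ with $c_\alpha\neq 0$ (and symmetrically near $\pi$), which fails to be in $L^p$ exactly when $-1/2\neq\alpha\le 1/2-1/p$ (resp.\ for $\beta$). This is the paper's argument, and it also explains why the threshold in the statement is $1/2-1/p$ rather than the $1/p-1/2$ appearing in Theorem \ref{thm:sob2}.
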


\begin{proof}
It is not hard to check that $f \in \mathbb{W}_{\ab}^{p,m}$ implies $\frac{d^k}{d\theta^k}f \in L^p(K)$ 
for each compact set $K \subset (0,\pi)$ and $0 \le k \le m$. Thus (a) and (b) follow.

To prove the remaining part, we give explicit counterexamples. Let $f(\theta)\equiv 1$. Clearly,
$f \in W^{p,m}(0,\pi)$. However, a simple computation shows that $D_{\ab}f \notin L^p$ unless
$\alpha=\beta=-1/2$. Thus $f \notin \mathbb{W}_{\ab}^{p,m}$ when $(\ab) \neq (-1/2,-1/2)$.

To disprove the other inclusion, consider $g=\Psi^{\ab}$. Since $g$ is up to a constant factor the Jacobi
function $\phi_0^{\ab}$, we know that $g \in \mathbb{W}_{\ab}^{p,m}$. On the other hand, 
$\frac{d}{d\theta}g \notin L^p$ if $-1/2\neq \alpha \le 1/2-1/p$ or $-1/2\neq \beta \le 1/2-1/p$,
so $g \notin W^{p,m}(0,\pi)$ for the indicated $\alpha$ and $\beta$.
\end{proof}

Finally, we observe that the tools established in this paper allow to generalize the results proved
in \cite[Section 5]{betancor} in the context of ultraspherical expansions. 
\begin{thm} \label{thm:mbs}
Let $\ab > -1$ and assume that $p\in E(\ab)$. Then the maximal operator 
$$
f \mapsto \sup_{0\le r<1}\big|U_r^{\ab}(f)\big|
$$
is bounded on the Sobolev space $\mathbb{W}_{\ab}^{p,1}$ defined by means of 
$\mathbb{D}^{(1)}=D^{(1)}=\mathcal{D}^{(1)}=D_{\ab}$.
\end{thm}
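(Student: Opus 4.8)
The plan is to reduce the boundedness of the maximal operator on $\mathbb{W}_{\ab}^{p,1}$ to the boundedness of two maximal operators on $L^p$, exploiting the commutation identity between $D_{\ab}$ and $\widetilde{U}_r^{\ab}$ already established in \eqref{19'}. By Theorem \ref{thm:sob1}, the norm on $\mathbb{W}_{\ab}^{p,1}$ is equivalent to $\|f\|_p + \|D_{\ab}f\|_p$, so it suffices to control $\|\sup_{0\le r<1}|U_r^{\ab}f|\|_p$ and $\|\sup_{0\le r<1}|D_{\ab}U_r^{\ab}f|\|_p$ by a constant times the $\mathbb{W}_{\ab}^{p,1}$-norm of $f$. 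The first of these is handled immediately by Theorem \ref{thm:conv}~(a). The whole matter therefore rests on the second quantity.

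For the second term I would first work on the dense subspace $S_{\ab}$ and pass the derivative through the Poisson-Jacobi integral. The case $k=1$ of \eqref{19'} gives $D_{\ab}\widetilde{U}_r^{\ab}f = \widetilde{U}_r^{\alpha+1,\beta+1}(D_{\ab}f)$, and the analogous computation for the non-spectral integral $U_r^{\ab}$ yields an identity relating $D_{\ab}U_r^{\ab}f$ to $U_r^{\alpha+1,\beta+1}(D_{\ab}f)$, up to the correction coming from the factor $r^n$ versus $r^{|n+A_{\ab}|}$. Concretely, arguing as in the proof of Theorem \ref{thm:conv}~(a) one passes between the $r^n$ and $r^{|n+A_{\ab}|}$ weightings at the cost of a single bounded term involving the lowest Jacobi function, controlled by H\"older's inequality and \eqref{bnd}. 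After this reduction, $\sup_{0<r<1}|D_{\ab}U_r^{\ab}f|$ is dominated pointwise by the maximal Poisson-Jacobi integral operator in the shifted context $(\alpha+1,\beta+1)$ applied to $D_{\ab}f$, plus a harmless bounded piece.

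The key observation is that $p \in E(\alpha,\beta)$ implies $p \in E(\alpha+1,\beta+1)$, since enlarging both parameters only widens the admissible range $E$. Hence Theorem \ref{thm:conv}~(a) applies in the shifted setting and gives
$$
\Big\|\sup_{0<r<1}\big|U_r^{\alpha+1,\beta+1}(D_{\ab}f)\big|\Big\|_p \le C\, \big\|D_{\ab}f\big\|_p.
$$
Combining this with the control of the correction term yields
$$
\Big\|\sup_{0\le r<1}\big|D_{\ab}U_r^{\ab}f\big|\Big\|_p \le C\big(\|f\|_p + \|D_{\ab}f\|_p\big)
	\le C \|f\|_{\mathbb{W}_{\ab}^{p,1}}, \qquad f \in S_{\ab}.
$$
Together with the bound for the zeroth-order term this establishes the estimate on $S_{\ab}$, and a density argument using Proposition \ref{p2'} extends it to all of $\mathbb{W}_{\ab}^{p,1}$.

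The main obstacle I anticipate is the density step: passing the uniform-in-$r$ maximal bound from $S_{\ab}$ to a general $f \in \mathbb{W}_{\ab}^{p,1}$ requires care, because one must ensure both that $U_r^{\ab}f$ is well defined for such $f$ and that the commutation identity $D_{\ab}U_r^{\ab}f = U_r^{\alpha+1,\beta+1}(D_{\ab}f)$ (modulo the correction) survives the limit. The clean way around this is to invoke Proposition \ref{p2'} to approximate $f$ in the $\mathbb{W}_{\ab}^{p,1}$-norm by elements of $S_{\ab}$, use the already-proven $L^p$-bound on the sublinear maximal operator to control the difference, and thereby obtain the estimate for $f$ by a standard limiting argument; the a.e.\ convergence provided by Theorem \ref{thm:conv}~(b) guarantees the limit object is the genuine maximal function. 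The shift $p \in E(\ab) \Rightarrow p \in E(\alpha+1,\beta+1)$ is the conceptual point that makes everything go through, and it should be verified at the outset.
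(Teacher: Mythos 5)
Your reduction is only half of what the theorem requires, and the half you carry out does follow the paper's route (the paper simply transplants the proof of Theorem 3 of \cite{betancor}): the commutation identity coming from \eqref{19'} (for the non-spectral integral it takes the clean form $D_{\ab}U_r^{\ab}f = r\,U_r^{\alpha+1,\beta+1}(D_{\ab}f)$, so the ``correction term'' you anticipate is just a factor of $r$), the observation that $p\in E(\ab)$ forces $p\in E(\alpha+1,\beta+1)$ (indeed $E(\alpha+1,\beta+1)=(1,\infty)$ because $\alpha+1,\beta+1\ge -1/2$), and Theorem \ref{thm:conv}(a) in the shifted setting together give
\begin{equation*}
\Big\|\sup_{0\le r<1}\big|D_{\ab}U_r^{\ab}f\big|\Big\|_p \le C\big(\|f\|_p+\|D_{\ab}f\|_p\big).
\end{equation*}
All of this is correct and is exactly the role \eqref{19'} plays in the paper's proof.

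The genuine gap is that this estimate does not prove the theorem, because $\sup_{0\le r<1}|D_{\ab}U_r^{\ab}f|$ is not the same object as $D_{\ab}\bigl(\sup_{0\le r<1}|U_r^{\ab}f|\bigr)$. Boundedness on $\mathbb{W}_{\ab}^{p,1}$ means that the maximal function $U_*f:=\sup_{0\le r<1}|U_r^{\ab}f|$ itself admits a weak derivative $D_{\ab}U_*f\in L^p$ with the accompanying norm bound, and nothing in your argument produces that weak derivative or the a.e.\ domination $|D_{\ab}U_*f|\le \sup_r|D_{\ab}U_r^{\ab}f|$. This is precisely the Kinnunen-type step that the paper's proof (via \cite{betancor}) supplies, and it explains the two ingredients of the paper's proof that your proposal never uses: the smoothness of $U_r^{\ab}f$ (term-by-term differentiation justified by \eqref{bnd} and \eqref{pochodna}), which allows one to replace the supremum by a supremum over a countable dense set of $r$'s; and Proposition \ref{3'}, which identifies $\mathbb{W}_{\ab}^{p,1}$ locally with a classical Sobolev space so that the maximum of finitely many smooth functions is weakly differentiable with derivative a.e.\ bounded by the maximum of the individual derivatives, after which a monotone-convergence plus weak-compactness argument passes to the full supremum. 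This issue is present already for $f\in S_{\ab}$, so it is not cured by density; and your proposed density step has the additional defect that an $L^p$ bound on $\sup_r|U_r^{\ab}(f-f_n)|$ controls only the $L^p$ distance of the maximal functions, not their $\mathbb{W}_{\ab}^{p,1}$ distance, so the ``standard limiting argument'' must in any case be replaced by a uniform Sobolev bound for the approximants followed by weak compactness of $\mathbb{W}_{\ab}^{p,1}\simeq\mathcal{L}_{\ab}^{p,1}\simeq L^p$.
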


\begin{proof}
We argue in the same way as in the proof of \cite[Theorem 3]{betancor}, replacing the relevant
ultraspherical results by their Jacobi counterparts. More precisely, instead of \cite[Theorem 2.2]{tohoku}
one should use Theorem \ref{thm:conv}. Further, \eqref{19'} should be applied in place of
\cite[(19)]{betancor}, and Proposition \ref{3'} in place of \cite[Proposition 3]{betancor}.
Finally, smoothness of the Poisson-Jacobi integral can be justified directly. Indeed, in view of
\eqref{bnd} and \eqref{pochodna}, the defining series can be differentiated term by term arbitrarily
many times.
\end{proof}

Following the ideas of \cite[Section 5]{betancor}, we also note that \eqref{19'} together with 
\cite[Theorem F]{Gilbert} allow one to conclude boundedness on 
${W}_{\ab}^{p,1}=\mathcal{W}_{\ab}^{p,1}$ of the maximal operator
associated with partial sums of Jacobi expansions, at least when $\ab \ge -1/2$.
\begin{thm}
Let $\ab \ge -1/2$ and let $1<p<\infty$. Then the maximal operator 
$$
f \mapsto \sup_{N\ge 0}\bigg|\sum_{n=0}^{N}a_n^{\ab}(f)\, \ph\bigg|
$$
is bounded on the Sobolev space appearing in Theorem \ref{thm:mbs}.
\end{thm}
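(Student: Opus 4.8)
The plan is to reduce the boundedness of the partial-sum maximal operator on the Sobolev space $W_{\ab}^{p,1}=\mathcal{W}_{\ab}^{p,1}$ to a corresponding $L^p$-boundedness statement, exactly as in the Poisson-Jacobi case handled in Theorem \ref{thm:mbs}. Writing $S_N^{\ab}f := \sum_{n=0}^{N}a_n^{\ab}(f)\,\ph$ and $S_*^{\ab}f := \sup_{N\ge 0}|S_N^{\ab}f|$, the norm on $W_{\ab}^{p,1}$ is $\|g\|_{W_{\ab}^{p,1}} = \|g\|_p + \|D_{\ab}g\|_p$, since here $\mathbb{D}^{(1)}=D_{\ab}$. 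Thus it suffices to bound both $\|S_*^{\ab}f\|_p$ and $\|D_{\ab}S_*^{\ab}f\|_p$ by a constant times $\|f\|_{W_{\ab}^{p,1}}$. The first of these is just the $L^p$-boundedness of the partial-sum maximal operator for Jacobi expansions, which for $\ab\ge -1/2$ and $1<p<\infty$ is the content of \cite[Theorem F]{Gilbert}.

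The heart of the matter is the second bound, and here I would exploit the commutation identity \eqref{19'}. The computation leading to \eqref{19'} shows, with $k=1$, that $D_{\ab}$ intertwines the Poisson-Jacobi integrals at parameters $(\ab)$ and $(\alpha+1,\beta+1)$; the identical algebra — based on the first identity of Lemma \ref{12'}, $D_{\ab}\ph = -\sqrt{n(n+\alpha+\beta+1)}\,\phi_{n-1}^{\alpha+1,\beta+1}$ — applied to the projection onto $\{\phi_0^{\ab},\dots,\phi_N^{\ab}\}$ yields the partial-sum analogue
$$
D_{\ab}S_N^{\ab}f = S_{N-1}^{\alpha+1,\beta+1}\big(D_{\ab}f\big), \qquad N \ge 1,
$$
for $f\in S_{\ab}$. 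Taking the supremum over $N$ gives $\sup_{N}\big|D_{\ab}S_N^{\ab}f\big| \le S_*^{\alpha+1,\beta+1}\big(D_{\ab}f\big)$. Applying \cite[Theorem F]{Gilbert} now at the shifted parameters $(\alpha+1,\beta+1)$, which still satisfy $\alpha+1,\beta+1\ge -1/2$ and hence fall under the hypothesis, bounds the right-hand side by $C\|D_{\ab}f\|_p$. Combining the two estimates produces $\|S_*^{\ab}f\|_{W_{\ab}^{p,1}} \le C\|f\|_{W_{\ab}^{p,1}}$ on the dense subspace $S_{\ab}$.

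A point requiring care is the interchange of $D_{\ab}$ with the supremum over $N$: on $S_{\ab}$ the sums are finite so the manipulations above are literal, but to pass to general $f\in W_{\ab}^{p,1}$ one must argue by density, using Proposition \ref{3'} to control $D_{\ab}$ together with the $L^p$ bounds just obtained, much as Proposition \ref{3'} is invoked in the proof of Theorem \ref{thm:mbs}. The identity $D_{\ab}S_N^{\ab}f = S_{N-1}^{\alpha+1,\beta+1}(D_{\ab}f)$ should be read as the exact analogue of the passage from \cite[(19)]{betancor} to \eqref{19'}, and indeed the whole argument mirrors the proof of Theorem \ref{thm:mbs} with the Poisson-Jacobi maximal operator replaced by the partial-sum maximal operator and Theorem \ref{thm:conv} replaced by \cite[Theorem F]{Gilbert}.

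I expect the main obstacle to be not any single estimate but the density/approximation step that justifies the commutation identity beyond $S_{\ab}$: one needs $S_{\ab}$ dense in $W_{\ab}^{p,1}$ (which follows from Proposition \ref{p2'} in the $D^{(k)}$ formulation, valid here since $D^{(1)}=D_{\ab}$) and a continuity argument ensuring that $D_{\ab}S_*^{\ab}$ passes to the limit. The restriction $\ab\ge -1/2$ enters precisely because \cite[Theorem F]{Gilbert} is available there and, crucially, remains available after the parameter shift to $(\alpha+1,\beta+1)$; outside this range the partial-sum result is not at our disposal, which is why the theorem is stated only for $\ab\ge -1/2$.
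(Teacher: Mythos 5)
Your proposal is correct and takes essentially the same route as the paper: the paper's own proof is exactly the remark that the partial-sum analogue of \eqref{19'} together with \cite[Theorem F]{Gilbert}, run through the scheme of the proof of Theorem \ref{thm:mbs} (i.e.\ the ideas of \cite[Section 5]{betancor}), gives the result. Your write-up merely makes explicit the key ingredients the paper leaves implicit, namely the commutation identity $D_{\ab}S_N^{\ab}f = S_{N-1}^{\alpha+1,\beta+1}\big(D_{\ab}f\big)$ coming from Lemma \ref{12'} and the fact that Gilbert's theorem still applies after the shift to the parameters $(\alpha+1,\beta+1)$.
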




\begin{thebibliography}{9}

\bibitem{BaUr1}
C. Balderrama, W. Urbina,
\emph{Fractional integration and fractional differentiation for Jacobi expansions},
Divulg. Mat. 15 (2007), 93--113.

\bibitem{BaUr2}
C. Balderrama, W. Urbina,
\emph{Fractional integration and fractional differentiation for $d$-dimensional Jacobi expansions}.
Special functions and orthogonal polynomials, 1--14, Contemp. Math., 471, Amer. Math. Soc.,
Providence, RI, 2008.

\bibitem{betancor}
J{.}J{.} Betancor, J{.}C{.} Fari\~na, L{.} Rodr\'{i}guez-Mesa, R{.} Testoni, J{.}L{.} Torrea,
\emph{A choice of Sobolev spaces associated with ultraspherical expansions},
Publ{.} Math{.} 54 (2010), 221--242.


\bibitem{BT1}
B{.} Bongioanni, J{.}L{.} Torrea,
\emph{Sobolev spaces associated to the harmonic oscillator},
Proc{.} Indian Acad{.} Sci{.} Math{.} Sci{.} 116 (2006), 337--360.

\bibitem{BT2}
B{.} Bongioanni, J{.}L{.} Torrea,
\emph{What is a Sobolev space for the Laguerre function systems?}, 
Studia Math{.} 192 (2009), 147--172.

\bibitem{CU}
C{.}P{.} Calder\'on, W{.} Urbina
\emph{On Abel summability of Jacobi polynomials series, the Watson Kernel and applications},
Illinois J{.} Math{.}, to appear. \texttt{arXiv:1207.4524}
 
\bibitem{CNS}
\'O{.} Ciaurri, A{.} Nowak, K{.} Stempak,
\emph{Jacobi transplantation revisited},
Math{.} Z{.} 257 (2007), 355--380. 

\bibitem{Gilbert}
J{.}E{.} Gilbert,
\emph{Maximal theorems for some orthogonal series. I},
Trans{.} Amer{.} Math{.} Soc{.} 145 (1969), 495--515.

\bibitem{Graczyk}
P{.} Graczyk, J{.}J{.} Loeb, I{.} L\'opez, A. Nowak, W{.} Urbina, 
\emph{Higher order Riesz transforms, fractional derivatives and Sobolev spaces for Laguerre expansions},
J{.} Math{.} Pures Appl{.} 84 (2005), 375--405.

\bibitem{L}
B. Langowski,
\emph{Harmonic analysis operators related to symmetrized Jacobi expansions},
Acta Math{.} Hungar{.} 140 (2013), 248--292.

\bibitem{LSL}
Z{.} Li, F{.} Song, K{.} Li,
\emph{Characterizations of smoothness of functions in terms of their Jacobi expansions},
Results Math{.} 59 (2011), 13--34.

\bibitem{LZ}
Z{.} Li, Z{.} Zhu,
\emph{Characterizations of smoothness of functions in terms of the basis conjugate to Jacobi polynomials},
J{.} Approx{.} Theory 160 (2009), 113--134.

\bibitem{MT}
H{.}N{.} Mhaskar, S{.} Tikhonov,
\emph{Wiener type theorems for Jacobi series with nonnegative coefficients}, 
Proc{.} Amer{.} Math{.} Soc{.} 140 (2012), 977--986.

\bibitem{Muckenhoupt}
B{.} Muckenhoupt,
\emph{Transplantation theorems and multiplier theorems for Jacobi series},
Mem{.} Amer{.} Math{.} Soc{.} 64 (1986).

\bibitem{MuS}
B{.} Muckenhoupt, E{.}M{.} Stein,
\emph{Classical expansions and their relation to conjugate harmonic functions},
Trans{.} Amer{.} Math{.} Soc{.} 118 (1965), 17--92.

\bibitem{Nowak&Roncal}
A{.} Nowak, L{.} Roncal,
\emph{Potential operators associated with Jacobi and Fourier-Bessel expansions}, 
preprint 2012. \texttt{arXiv:1212.6342}

\bibitem{NoSj}
A{.} Nowak, P{.} Sj\"ogren,
\emph{Riesz transforms for Jacobi expansions},
J{.} Anal{.} Math{.} 104 (2008), 341--369.

\bibitem{NoSj3}
A{.} Nowak, P{.} Sj\"ogren,
\emph{The multi-dimensional pencil phenomenon for Laguerre heat-diffusion maximal operators},
Math{.} Ann{.} 344 (2009), 213--248.

\bibitem{NS1}
A. Nowak, P. Sj\"ogren,
\emph{Calder\'on-Zygmund operators related to Jacobi expansions},
J. Fourier Anal. Appl. 18 (2012), 717--749.

\bibitem{NS2}
A. Nowak, P. Sj\"ogren,
\emph{Sharp estimates of the Jacobi heat kernel},
Studia Math{.} 218 (2013), 219--244.

\bibitem{parameters}
A{.} Nowak, P{.} Sj\"ogren, T{.}Z{.} Szarek,
\emph{Analysis related to all admissible type parameters in the Jacobi setting},
preprint 2012. \texttt{arXiv:1211.3270}

\bibitem{symmetrized}
A{.} Nowak, K{.} Stempak,
\emph{A symmetrized conjugacy scheme for orthogonal expansions},
 Proc{.} Roy{.} Soc{.} Edinburgh Sect{.} A 143 (2013), 427--443.

\bibitem{RT}
R{.} Radha, S{.} Thangavelu,
\emph{Multipliers for Hermite and Laguerre Sobolev spaces},
J{.} Anal{.} 12 (2004), 183--191.

\bibitem{tohoku}
K{.} Stempak,
\emph{Conjugate expansions for ultraspherical functions},
Tohoku Math. J. 45 (1993), 461--469.


\bibitem{Stempak}
K{.} Stempak,
\emph{Jacobi conjugate expansions},
Studia Sci{.} Math{.} Hungar{.} 44 (2007), 117--130.

\bibitem{Sz}
G{.} Szeg\"o,
\emph{Orthogonal polynomials},
Fourth Edition, {Amer{.} Math{.} Soc{.} Colloq{.} Publ{.}} {23}, 
Amer{.} Math{.} Soc{.}, Providence, R{.} I{.}, 1975.

\end{thebibliography}
\end{document}